\documentclass[reqno]{amsart}
\usepackage{xcolor}
\usepackage{tikz}
\usepackage{graphicx} % Required for inserting images
\usepackage{amsmath, amssymb, amsthm}
\theoremstyle{plain}

\newcommand\myatop[2]{\genfrac{}{}{0pt}{}{#1}{#2}}

\newcommand{\cP}{{\mathcal{P}}}
\newcommand{\cF}{{\mathcal{F}}}
\newcommand{\cM}{{\mathcal{M}}}
\newcommand{\cH}{{\mathcal{H}}}
\newcommand{\cS}{{\mathcal{S}}}
\newcommand{\R}{\mathbb{R}}

\newcommand{\N}{\mathbb{N}}
\newcommand{\Q}{\mathbb{Q}}

\newcommand{\ep}{\varepsilon}
\newcommand{\1}{\mathrm{1}}

\newcommand{\ima}{\operatorname{im}}
\DeclareMathOperator{\conv}{conv}
\DeclareMathOperator{\diam}{diam}

\DeclareMathOperator{\inter}{int}
\DeclareMathOperator{\cl}{cl}

\DeclareMathOperator{\card}{card}

\DeclareMathOperator*{\esssup}{ess\,sup}
\DeclareMathOperator{\Tan}{Tan}

\DeclareMathOperator{\di}{\diamondsuit}

\newtheorem{theorem}{Theorem}
\newtheorem{lemma}[theorem]{Lemma}
\newtheorem{corollary}[theorem]{Corollary}
\newtheorem{proposition}[theorem]{Proposition}

\theoremstyle{definition}
\newtheorem*{condition}{}

\title[(Outer) Minkowski content]{On the (outer) Minkowski content with lower-dimensional structuring element}
\author{Markus Kiderlen}
\address{Aarhus University, Department of Mathematics, Ny Munkegade 118,
DK-8000 Aarhus C, Denmark}
\email{kiderlen@math.au.dk}
\author{Jan Rataj}
\address{Charles University, Faculty of Mathematics and Physics, Sokolovsk\'a 83, 186 75 Praha 8, Czech Republic}
\email{rataj@karlin.mff.cuni.cz}

\date{\today}

\begin{document}
	
	\maketitle
	
	\begin{abstract}
		Given a convex body $Q$ (structuring element) and a set $A$ in a Euclidean space, we consider the $Q$-Minkowski content of $A$. It is defined as the usual isotropic Minkowski content of $A$, but where the Euclidean ball is replaced by $Q$. When $Q$ is full-dimensional, the existence of the $Q$-Minkowski content can be assured by a sufficient condition which was stated by Ambrosio, Fusco and Pallara in the isotropic case. If $Q$ is not full-dimensional, we show that a  weaker condition is sufficient for this purpose.  We also consider the outer $Q$-Minkowski content of $A$ yielding the anisotropic perimeter of $A$ and we find a sufficient condition for its existence. Finally, we present an example of a set in three-dimensional Euclidean space, which does not admit the isotropic outer Minkwski content, but it admits the outer $Q$-Minkowski content for all two-dimensional disks $Q$.
	\end{abstract}
	
	\section{Introduction}
	
	For two compact subsets $A$ and $Q$ of $\R^n$, the operation 
	$A\oplus Q=\{x+y:x\in A,y\in Q\}$ denotes their Minkowski sum, and we write $\lambda_n$ for the Lebesgue measure  (volume) in $\R^n$. 
	We will discuss the existence and the value of the limit 
	\begin{equation}
		\mathcal{SM}_{Q}(A)= \lim_{r\to 0_+}\frac 1{r}\lambda_n\big((A\oplus rQ)\setminus A\big), \label{eq:defQuot} 
	\end{equation}
	which will be called  the \emph{outer $Q$-Minkowski content of $A$}. The set $Q$ will be referred to as \emph{structuring element}. When $A$ is a convex body and $Q$ is the Euclidean unit ball $B^n$, $\mathcal{SM}_{B^n}(A)$ coincides with the surface area of $A$ by standard convex geometric arguments. 
	As the right-hand side of \eqref{eq:defQuot} can often be approximated algorithmically, this already indicates why the outer $Q$-Minkowski content is important in many applications when properties
    of $A$ related to the surface area are sought for. 
	In stochastic geometry, it plays a crucial role understanding the contact distribution function \cite{HL2000,JRMK2018,villa2023} and 
	digitizations of random sets \cite{KR2006}. 	
	The right-sided directional derivatives at $0$ of the covariogram $g_A:\R^n\to [0,\infty)$, \cite[Sect.~4.3]{matheron}, 
	\[
	g_A(x)=\lambda_n(A\cap(A+x))=\lambda_n(A)-\lambda_n\big((A\oplus \{x\})\setminus A\big), \quad x\in \R^n, 
	\]
	are special cases of \eqref{eq:defQuot}  and have been  considered repeatedly; see \cite{galerne}, the more general 
	\cite{JRMK2018} and the references therein.

	We will discuss the properties of the outer Minkowski content under regularity conditions on $A$ %(typically $A$ is a set with \emph{finite perimeter} $P(A)$ satisfying additional conditions)
	in the case where only very weak assumptions are imposed on the structuring element $Q$. Before doing so, we review known results on the  outer Minkowski content to put the present results into perspective. 
	
	In the early literature, the \emph{isotropic} case was considered, where the structuring element $Q$ was chosen to coincide with the Euclidean ball $B^n$ in $\R^n$. 
	Since $A\oplus rB^n$ 
	is the set of all points in $\R^n$ with Euclidean distance at most $r$ from $A$, it is natural to consider anisotropic distances as well, corresponding to structuring elements that are 
	convex bodies (compact convex subsets of $\R^n$ with interior points),
	origin-symmetric  and with their interior containing the origin $0$. 
	Apart from the origin-symmetry, which is dispensable here, these are the standard assumptions on $Q$ in much of the existing literature, but in the present paper, $Q$ may also be lower-dimensional and even non-convex. Exceptions from this rule are \cite{KR2006} and \cite{JRMK2018} where the structuring element is chosen to be finite. %, and \cite{} (I remeber a work where a local metric was used.)

The outer $Q$-Minkowski content is closely related to the $(n-1)$-dimensional  \emph{$Q$-Minkowski content} which is defined for $E\subset\R^d$ as
\begin{equation}  \label{def_M}
\cM_Q(E):=\lim_{r\to 0_+}\frac 1{2r}\lambda_n(E\oplus rQ),
\end{equation}
provided that the limit exists (cf.\ \cite{LV16}). An easy comparison with \eqref{eq:defQuot} reveals that 
$\cM_Q(E)$ equals $\frac 12 \mathcal{SM}_{Q}(E)$ if $\lambda_n(E)=0$ and $\infty$ otherwise.
Its isotropic version with $Q=B^n$ yields the classical Minkowski content which is known to agree with the $(n-1)$-dimensional Hausdorff measure for sets $E$ which are ``nice enough'' $(n-1)$-dimensional sets  in the sense of geometric measure theory -- we will list several sufficient criteria below.

While the (classical) Minkowski content is naturally compared with the Hausdorff measure, the \emph{outer} Minkowski content is related to the \emph{perimeter}. Consider a set $A\subset\R^n$ of finite perimeter $P(A)$. Ambrosio, Colesanti and Villa \cite[Theorem~5]{ACV08} showed that if 
its boundary $\partial A$ admits the isotropic Minkowski content $\mathcal M_{B^n}(\partial A)$ and if it coincides with $P(A)$, then 	$\mathcal{SM}_{B^n}(A)$ exists and has the same value. 
Intuitively, in the non-isotropic case, the contribution of a boundary patch to the local parallel volume must be proportional to the `height' of the structuring element $Q$ in the outer normal direction of the patch, so one expects a relation of the form
	\begin{equation}\label{eqChamb}
		\mathcal{SM}_{Q}(A)= \int_{S^{n-1}} h(Q\cup\{0\},v)S_{n-1}^*(A,dv)=:P_Q(A)  
	\end{equation}
involving the  \emph{anisotropic perimeter $P_Q(A)$ of $A$}. Here,
$h_M(v)=\max_{x\in M}\langle x,v\rangle$ is the support function of the convex hull of a set $M\subset \R^n$, and  
	$S_{n-1}^*(A,\cdot)$ is the \emph{generalized surface area measure}. This measure is defined in Section \ref{sec:notation} below and can be thought of as 
	the image measure of $\cH^{n-1}$,  restricted to the essential boundary $\partial^* A$ of $A$, under the (a.e.~existing) Gauss map for the  \emph{outer} normal.  
    
    Chambolle at al.~\cite[Thm.~3.4]{CLL14} showed that  \eqref{eqChamb} indeed holds true  under the standard assumptions on $Q$ if $A$ is a set with finite perimeter such that the isotropic $\mathcal{SM}_{B^n}(A)$ exists and coincides with $P(A)$. We will show in  Theorem~\ref{prop0}~(ii), below, that the same statement holds true for general nonempty compact $Q$; this setting is the reason why $Q\cup\{0\}$ instead of $Q$ appears in  \eqref{eqChamb}. 
    
    We also extend the above mentioned result \cite[Theorem~5]{ACV08},
    replacing $B^n$ with an arbitrary  (possibly lower-dimensional) convex body $Q$: if 
the boundary $\partial A$ admits the anisotropic $Q$-Minkowski content $\mathcal M_Q(\partial A)$ and if it coincides with $P_{\di Q}(A)$, where  $\di Q:=\frac 12(Q\oplus(-Q))$ is the \emph{symmetral} of $Q$, then $\mathcal{SM}_Q(A)$ exists and equals $P_Q(A)$, see Proposition~\ref{prop_AFP}.
	
	%When does the (outer) Minkowski content exist?  
	A classical result by Federer {\cite[Theorem~3.2.29]{Federer69}} states that $\mathcal M_{B^n}(E)$ {exists} and equals $\cH^{n-1}(E)$ if the closed set $E\subset \R^n$  is $(n-1)$-rectifiable (i.e.~it is the Lipschitz image of a bounded subset of $\R^{n-1}$). If the compact set 
	$E$ is merely countably $\cH^{n-1}$-rectifiable (i.e.~it can, up to a $\cH^{n-1}$-null set be covered by a countable union of Lipschitz images of $\R^{n-1}$) its isotropic Minkowski content need not exist, see the counterexample \cite[Ex.~2.103]{AFP}. This is why Ambrosio, Fusco and Pallara \cite{AFP} introduced the following condition, which will be given a name for further reference. 
	
	\begin{condition}[\textbf{AFP-condition}] 
		There exists a Radon measure $\nu$ on $\R^n$, absolutely continuous with respect to $\cH^{n-1}$,  and a constant $\gamma>0$, such that 
		\[
		\nu(B(x,r))\geq\gamma r^{n-1},\quad x\in E,\, r\in(0,1). 
		\]
	\end{condition}

	It is shown in  \cite{AFP} that the isotropic Minkowski content exists and coincides with $\cH^{n-1}(E)$ , if the compact, countably $\cH^{n-1}$-rectifiable set  $E\subset \R^n$  satisfies the AFP-condition. Lussardi and Villa \cite[Thm.~3.4]{LV16}
	proved that the same conditions also imply the existence of 
	$\mathcal{M}_{Q}(E)$   and allows to express this limit geometrically %\mk{as an anisotropic perimeter (see below)} 
    under standard assumptions on $Q$. 
	We extend their result not only allowing for any  compact convex set $0\in Q\subset \R^n$, but replacing the 
	AFP-condition by the following weaker one,  where $L$ is chosen as the linear span of $Q$. 
	
	\begin{condition}[\textbf{AFP-condition relative to $L$}]
		There exists a Radon measure $\nu$, absolutely continuous with respect to $\cH^{n-1}$, and a  constant $\gamma>0$, such that  
		\begin{equation}\label{suff_k}
			\nu(B(x,r))\geq\gamma r^{k-1}\lambda_{n-k}(p_{L^\perp}(E\cap B(x,r))),\quad x\in E,\, r\in(0,1).
		\end{equation}
		Here, $p_{L^\perp}$ denotes the orthogonal projection onto the orthogonal complement $L^\perp$ of $L$.

	\end{condition}
In Theorem \ref{TM} below we show that if the compact set $E\subset\R^n$ is  countably $\cH^{n-1}$-rectifiable and  satisfies the AFP-condition relative to $L$, then it admits the $Q$-Minkowski content and we have
$$\cM_Q(E)=\int_Eh(\di Q,\nu_E(x))\, \cH^{n-1}(dx),$$
where $\nu_E(x)$ denotes a unit vector perpendicular to the approximate tangent space $\Tan^{n-1}(E,x)$ and is defined up to sign $\cH^{n-1}$-almost everywhere on $E$ (see Subsection~\ref{GMT}).

The AFP-condition has also been used  in \cite{LV16} to show the existence of the outer $Q$-Minkowski content of a set $A$ of finite perimeter when $Q$ is full dimensional. Lussardi and Villa show that if $\partial A$ is compact, countably $\cH^{n-1}$-rectifiable and satisfies the AFP-condition then $A$ has the outer $Q$-Minkowski content for any full-dimensional convex body $Q$, and this equals
$$\mathcal{SM}_Q(A)=P_Q(A)+2\int_{\partial A\cap A^0}h({\di Q},\nu_{\partial A}(v))\, \cH^{n-1}(dv),$$
where $A^0$ denotes the set of points with vanishing Lebesgue density of $A$ (see \cite[Theorem 4.4]{LV16}). We avoid the contribution of the lower-dimensional part of $\partial A$ to the outer Minkowski content by assuming that $P(A)=\cH^{n-1}(\partial A)$; this implies that the integral in the above formula vanishes. Under this constraint, we extend their result to lower-dimensional convex $Q$ in Corollary~\ref{C1}.

	It should be noted that the existence of 	$\mathcal{SM}_{B^n}(A)$, and, a fortiori, the AFP-condition, are not necessary for the existence of the outer Minkowski content with a 
    lower-dimensional structuring element.  Indeed, Example 3 in Section \ref{Sect_examples} presents a compact set $A\subset \R^3$ of finite perimeter that does not allow for an isotropic outer Minkowski content, although $\mathcal{SM}_{B^3\cap L}(A)$ exists for all two-dimensional subspaces $L\subset \R^3$. 
    As the construction is difficult,  we also give a simple set $A\subset \R^n$ without isotropic outer Minkowski content that does allow for $\mathcal{SM}_{B^n\cap L}(A)$ for at least one linear subspace $L$ of given dimension in $\{2,\ldots,n-2\}$, see Example 1. 
    Example 2 presents a compact set of finite perimeter in $\R^3$, for which $\mathcal{SM}_{Q}(A)$ does not exist for $Q=B^3$, nor for any convex set $Q$, such that its affine hull is a two-dimensional subspace of $\R^3$. 
	
	Even if $\mathcal{SM}_Q(A)$ exists, \eqref{eqChamb} will generally not hold, as the right-hand side does not depend on changes of $A$ on a $\lambda_n$-null set, whereas the left-hand side can be made arbitrarily large, for instance by adding a  large  $(n-1)$-dimensional disc to $A$ when $Q$ has interior points. 
	
Even if additional assumptions on $A$ imply that this extra term vanishes, for instance by requiring that $A\cap A^0=\emptyset$, it is an unresolved problem to find a necessary and sufficient geometric condition 
on $A$ to guarantee the existence of $\mathcal{SM}_{Q}(A)$ of the form \eqref{eqChamb}. 
However, in the case of the structuring element being a line segment $Q=[0,u]$, $u\in S^{n-1}$,  we prove the surprising result that ${\mathcal{SM}}_{[0,u]}(A)$ exists for all sets $A\subset \R^n$ of finite perimeter if $A\cap A^0=\emptyset$, and that \eqref{eqChamb} holds in this case. More explicitly, Theorem \ref{ThmQlineSegment} shows 
\[ 
{\mathcal{SM}}_{[0,u]}(A)=\int_{S^{n-1}}\langle u,v\rangle^+\, S_{n-1}^*(A^*,dv).
\]
Here, we used the notation $t^+=\max\{0,t\}$, $t\in\R$. This result is obtained by choosing a suitable representative 
of $A$ in the family $[A]_\sim$  of all sets that coincide with $A$ up to a $\lambda_n$-null set. A
similar approach was used in \cite{CLL14}. 

The paper is organized as follows. After having introduced notation and preliminary results in Section \ref{sec:notation}, 
we consider and dissuss the outer Minkowski content as a function of $L^1$-classes of sets in Section \ref{sec:3}. Section \ref{sec:4}  treats results for general structuring elements $Q$ that do not require a version of the AFP-condition,  and Section \ref{sec:5} is dedicated to one-dimensional structuring elements. The central results in Section \ref{sec:suff_cond}	 explain the role of the AFP-condition relative to $L$ to assure  the existence of $Q$-Minkowski content (Theorem \ref{TM}) and outer $Q$-Minkowski content 
(Corollary \ref{C1}) for convex $Q$ parallel to some subspace $L\subset \R^n$. The paper concludes with three examples in Section~\ref{Sect_examples}, where the last of these examples is one where the  AFP-condition relative to $L$ can explicitly be confirmed, whereas the ordinary AFP condition is violated.

We are very grateful to Filip Fry\v s for carefully reading the manuscript and pointing out several typos and errors.

\section{Notation and Preliminaries}
\label{sec:notation}
For sets $A,B\subset \R^n$, the set $A\Delta B=(A\setminus B)\cup (B\setminus A)$ denotes their symmetric difference. 
Our notations for the boundary, the closure and the interior of $A$ are $\partial A$, $\cl A$ and $\inter A$, respectively. 
We will write $\langle x,y\rangle$ for the usual inner product of $x,y\in \R^n$ and  $\|x\|$ for the Euclidean norm of $x$. 
$B^n$ denotes the Euclidean unit ball, $S^{n-1}=\partial B^n$  its boundary and $\kappa_n=\lambda_n(B^n)$ its volume. 
The two interchangeable notations for the ball of radius $r\ge 0$ with center $x\in \R^n$ are $B(x,r)=x+rB^n$. 
Given a linear subspace $L\subset \R^n$, we will write $p_L:\R^n\to L$ 
for the orthogonal projection onto $L$.  We will also use the notation $B^k$, $k\in \{1,\ldots,n-1\}$, for a $k$-dimensional unit ball in $\R^n$, meaning that there is a $k$-dimensional linear subspace $L$ in $\R^n$ such that $B^k=B^n\cap L$.

\subsection{Geometric measure theory}  \label{GMT}
We refer the reader to \cite{AFP} or \cite{Federer69} concering notions and results from geometric measure theory.	For every measurable set $A\subset \R^n$ the Lebesgue density 
at $x\in \R^n$ is given by
\[
\Theta^n(A,x)=\lim_{r\to 0_+} \frac{\lambda_n(A\cap (x+rB^n))}{\lambda_n(x+rB^n)},
\]
if the limit exists. For $t\in [0,1]$ we let 
\[
A^t=\{x\in \R^n: \Theta^n(A,x)=t\}. 
\]

Let $k\in\{0,1,\dots,n\}$ be given. A set $E\subset\R^n$ is called \emph{$k$-rectifiable} if it is a Lipschitz image of a bounded subset of $\R^k$, and \emph{countably $\cH^k$-rectifiable} if we can write $E=E_0\cup E_1\cup\dots$ with $k$-rectifiable sets $E_1,E_2,\dots$ and $\cH^k(E_0)=0$. If $E\subset\R^n$ is countably $\cH^k$-rectifiable then for $\cH^k$-almost all $x\in E$, the cone $\Tan^k(E,x)$ of \emph{approximate tangent vectors} of $E$ at $x$ forms a $k$-dimensional vector subspace of $\R^n$. In particular, if $k=n-1$, we will write $\nu_E(x)$ for a unit vector perpendicular to $\Tan^{n-1}(E,x)$ and call it a \emph{unit normal vector to $E$ at $x$}; the vector $\nu_E(x)$ is defined uniquely up to sign at $\cH^{n-1}$-almost all $x\in E$.

We will later use  the following property (which follows easily from the above mentioned facts): If $E\subset\R^n$ is countably $\cH^k$-rectifiable and $E_0\subset E$ then $E_0$ is countably $\cH^k$-rectifiable as well and we have $\Tan^k(E_0,x)=\Tan^k(E,x)$ for $\cH^k$-almost all $x\in E_0$. In the case $k=n-1$ we have also $\nu_{E_0}(x)=\pm\nu_E(x)$ for $\cH^{n-1}$-almost all $x\in E_0$.

\subsection{Sets with finite perimeter}
By definition, cf.~\cite{AFP}, a measurable set $A\subset\R^n$ has finite perimeter, if its 
indicator  function $1_A$ has a 
distributional derivative that can be represented 
as a finite Radon measure $D\1_A$. (Poly-)convex sets, compact sets with positive reach
%UPR-sets 
and compact Lipschitz domains are sets of finite perimeter.	If $A$ is a set of finite perimeter, then the variation measure $|D\1_A|$ can be written as a restriction of the $(n-1)$-dimensional Hausdorff measure $\cH^{n-1}$ in the form 
\begin{align} \label{D1_A_Interpretation}
	|D\1_A|=\cH^{n-1}\llcorner (\partial^* A),
\end{align}
where $\partial^* A=\R^n\setminus (A^0\cup A^1)$ is the  \emph{essential  boundary}  of  $A$, see \cite[(3.63)]{AFP}. The  \emph{perimeter} of $A$ is 
\begin{equation}  \label{perimeter}
	P(A)=|D\1_A|(\R^n)=\cH^{n-1}(\partial^* A).
\end{equation}
The polar decomposition (cf.\ \cite[Corollary 1.29]{AFP}) of  $D1_A$ reads 
$$D1_A=\Delta_{1_A}\,|D1_A|,$$
where $\Delta_{\1_A}$ is an $S^{n-1}$-valued function defined
$\cH^{n-1}$-almost everywhere on $\partial^*A$ and can be interpreted
as a generalized inner unit normal vector field to $A$. (In fact there
exists a subset of $\partial^*A$ of full $\cH^{n-1}$ measure, called
{\it reduced boundary} $\cF A$ and a representative $\nu_A$ of $-\Delta_{1_A}$
defined there such that the half-space $\{y:\, \langle y,\nu_A(a)\rangle\leq 0\}$
coincides with the approximate tangent cone of $A$ at $a$ for any $a$
from the reduced boundary, see \cite[\S3.5]{AFP}.) Thus, it is 
natural to define the {\it generalized surface area measure} of a set $A$ with finite perimeter as
\begin{equation}  \label{def_SAM}
	S_{n-1}^*(A;\cdot)=\cH^{n-1}\{a\in\partial^*A:\, -\Delta_{\1_A}(a)\in\cdot\}.
\end{equation}
The measure $S_{n-1}^*(A;\cdot)$ coincides with the usual surface area measure  $S_{n-1}(A;\cdot)$ if $A$ has Lipschitz boundary.

If $A\subset \R^n$ is a set of finite perimeter then 
$A^C$ has also finite perimeter  and $\cP(A)=\cP(A^C)$. Even stronger, $|D1_A|=|D1_{A^C}|$ (from,  e.g., \cite[Theorem 3.59]{AFP} and the fact that $\cH^{n-1}\llcorner \cF A=\cH^{n-1}\llcorner \cF A^C$).
This implies $\nu_A(x)=-\nu_{A^C}(x)$ for a.e. $x\in \cF A$ and 
\begin{equation}
	\label{eq_S_AC}
	S^*_{n-1}(A^C,S)=S^*_{n-1}(A,-S)
\end{equation}
for all Borel sets $S\subset S^{n-1}$ follows. 
We define the \emph{anisotropic perimeter} of $A$ with respect to a structuring element $Q$ as
$$P_Q(A)=\int_{S^{n-1}}h(Q\cup\{0\},v)\, S_{n-1}^*(A,dv).$$
Note that $P_{B^n}(A)=P(A)$ (since $h(B^n,\cdot)=1$) and, due to \eqref{eq_S_AC}, we have $P_Q(A^C)=P_{-Q}(A)$.
\medskip

\section{Outer Minkowski content of $L_1$-classes of sets}\label{sec:3}
Let $A\subset\R^n$ have finite perimeter and $Q\subset\R^n$ be a nonempty compact set.
Following the notation from \cite{JRMK2018}, we define
\begin{equation}
	\label{eq_defG}
	G(Q,\1_A):=\int_{\R^n}\left(\sup_{u\in Q}\1_A(x-u)-\1_A(x)\right)^+\, \lambda_n(dx)=\lambda_n((A\oplus Q)\setminus A).
\end{equation}
Note that we have the alternative representation  
$$G(Q,\1_A)=\int_{A^C}\1_{A\cap(x-Q)\neq\emptyset}\, \lambda_n(dx).$$

Later, we will need the fact that if $K$ is a (full-dimensional) convex body then 
\begin{equation} \label{E_K_int}
	\lambda_n(A\oplus K)=\lambda_n(A\oplus \inter K)
\end{equation}
holds for any measurable set $A\subset\R^n$. 
Note first that it is sufficient to show \eqref{E_K_int} for compact sets $A$ and then use the approximations where the set $A$ above is replaced with $(\cl A)\cap B(0,R)$ and $R\to\infty$. For  compact $A$, relation \eqref{E_K_int} is a consequence of the continuity of the volume function $r\mapsto \lambda_n(A\oplus rK)$, which follows from the results of Chambolle et al.\ in \cite{CLV21} (indeed, \cite[Theorem~5.2]{CLV21} states that the volume function even has finite 
one-sided derivatives at any $r>0$).

We now turn to the problem that \eqref{eq:defQuot}  depends on the representative of $A$ in $[A]_\sim$.  
If the  structuring element $Q$ is at most countable--like in  \cite{JRMK2018}, where it was assumed to be finite--the value in \eqref{eq_defG} is unchanged if $A$ is altered on a $\lambda_n$-null set.
As this is not the case for all compact $Q$, we choose the  representative  $\underline A:=(A^0)^C\in [A]_\sim$, and define 
\[
\underline G(Q,\1_A):=G(Q,\1_{\underline A}). 
\]
Since, for any $B\in [A]_\sim$ we have $\underline B=\underline A$, it follows that $B\mapsto \underline G(Q,\1_B)$ is constant on $[A]_\sim$. %Also, $\underline G(Q,\1_A)= G(Q,\1_A)$ when $Q$ is at most countable. 
The following lemma gives more explicit representations of $\underline G(Q,\1_A)$ if $Q$ is a convex set.

\begin{lemma}\label{G*}
	Let a measurable set $A\subset\R^n$ %with finite perimeter 
	be given and fix a nonempty compact convex set $Q$ of dimension $k\in \{0,\ldots,n\}$.
	\begin{enumerate}
		\item[(i)] We have
		\begin{align}\nonumber
			\underline G(Q,\1_A)=&\int_{\R^n}
			\left(\esssup^{k}_{u\in Q}
			\1_A(x-u)-\1_A(x)\right)^+\, \lambda_n(dx)\\
			=&\int_{\R^n}\left(\1_{\lambda_k(A\cap(x-Q))>0}-\1_A(x)\right)^+\, \lambda_n(dx),
			\label{eq:G*}
		\end{align}
		where the essential supremum is understood with respect to $\lambda_k$ in the affine hull of $Q$.
		\item[(ii)] 
		$\underline G(Q,\1_A)=\min\big\{G(Q,\1_{B}):
		\,B\in [A]_\sim\big\}$.
		\item[(iii)]
		If $A\cap A^0=\emptyset$, then 
		\[
		\underline G(Q,\1_A)=G(Q,\1_A). 
		\]
	\end{enumerate}
\end{lemma}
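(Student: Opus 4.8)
The plan is to prove (i) first; parts (ii) and (iii) then follow from (i) together with one elementary inequality. Write $R=\{x\in\R^n:\lambda_k(A\cap(x-Q))>0\}$. I record two reductions. The two integrands in (i) agree pointwise: since $u\mapsto\1_A(x-u)$ is $\{0,1\}$-valued, its $\lambda_k$-essential supremum over $Q$ equals $1$ precisely when $\lambda_k(\{u\in Q:x-u\in A\})>0$, and the reflection $u\mapsto x-u$ is a $\lambda_k$-isometry of $\mathrm{aff}\,Q$ onto $\mathrm{aff}(x-Q)$, so that set has the same $\lambda_k$-measure as $A\cap(x-Q)$; hence both integrands equal $(\1_R(x)-\1_A(x))^+$, and it suffices to show $\underline G(Q,\1_A)=\lambda_n(R\cap A^C)$. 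Moreover both sides are unchanged if $A$ is altered on a $\lambda_n$-null set: the left-hand side because $\underline A$ is, and the right-hand side because, by Fubini, for $\lambda_n$-a.e.\ $x$ the slice of a $\lambda_n$-null set by the $k$-plane $x-\mathrm{aff}\,Q$ is $\lambda_k$-null, so $\1_R$ is a.e.\ unchanged. Replacing $A$ by $\underline A$ (and using $\underline{(\underline A)}=\underline A$) I may therefore assume $A\cap A^0=\emptyset$, i.e.\ $A=\underline A$; then $A^C=A^0$ up to a null set and $\underline G(Q,\1_A)=G(Q,\1_A)=\lambda_n\big((A\oplus Q)\cap A^0\big)$.

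Under this assumption, $\lambda_k(A\cap(x-Q))>0$ forces $A\cap(x-Q)\neq\emptyset$, so $R\subseteq A\oplus Q$ and thus $\lambda_n(R\cap A^C)\le\underline G(Q,\1_A)$. The whole difficulty is the reverse inclusion
\[
(A\oplus Q)\cap A^0\subseteq R\quad\text{up to a }\lambda_n\text{-null set,}
\]
i.e.\ that for $\lambda_n$-a.e.\ $x$ with $\Theta^n(A,x)=0$ the set $x-Q$, if it meets $A$ at all, meets it in positive $\lambda_k$-measure. To prove it I would fix $L=\spa(Q-Q)$ (of dimension $k$), translate so $Q\subseteq L$, and slice along $M=L^\perp$, writing $x=(s,t)$ and $A_t=\{s\in L:(s,t)\in A\}$. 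In each slice $Q$ is a full-dimensional convex body in $L\cong\R^k$, and the engine is the full-dimensional statement: for measurable $C\subseteq\R^k$,
\[
\underline C\oplus\inter Q\ \subseteq\ \{s:\lambda_k(C\cap(s-Q))>0\}\ \subseteq\ \underline C\oplus Q,
\]
with $\underline C$ formed using the $k$-dimensional density; by \eqref{E_K_int} applied in $L$ the two ends have equal $\lambda_k$-measure, so the middle set equals $\underline C\oplus Q$ up to a $\lambda_k$-null set.

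The obstacle is that the slice governing $(A\oplus Q)\cap A^0$ is $\underline A_t=\{s:\Theta^n(A,(s,t))\neq0\}$, formed with the \emph{ambient} density, rather than $\underline{(A_t)}$, formed with the \emph{slice} density. Combining Fubini with the Lebesgue density theorem (ambiently and in each slice) gives $A^0_t=(A_t)^0$ up to a $\lambda_k$-null set for $\lambda_{n-k}$-a.e.\ $t$, hence $\underline A_t=\underline{(A_t)}$ up to a $\lambda_k$-null set $N_t$; but a $\lambda_k$-null set may have non-null Minkowski sum with the full-dimensional $Q$, so one cannot simply substitute $\underline{(A_t)}$ for $\underline A_t$ inside $\oplus Q$ slice by slice. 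The point that rescues the argument is that these exceptional contributions are confined to a $\lambda_n$-null set once integrated over $t$: the slicing fact that $\Theta^k(A_t,s)=1$ for $\lambda_n$-a.e.\ $(s,t)\in A$ (again Fubini plus slice density) shows that the set $Y_b$ of bad witnesses is $\lambda_n$-null, and every $x\in(A\oplus Q)\cap A^0\setminus R$ must have all its witnesses in $A\cap(x-Q)$ either in $Y_b$ (lying in $\relint(x-Q)$) or on the relative boundary of $x-Q$. I would then check that both loci meet $\lambda_{n-k}$-a.e.\ slice in a $\lambda_k$-null set, so the bad set is $\lambda_n$-null. This confinement — reconciling the slicewise full-dimensional identity with the ambient-density set $\underline A$ — is the main obstacle and the place where the hypotheses and \eqref{E_K_int} are genuinely used.

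Finally, (ii) and (iii) follow. For any $B\in[A]_\sim$ one has, for a.e.\ $x$, $\1_{B^C}(x)\1_R(x)\le\1_{B^C}(x)\1_{(x-Q)\cap B\neq\emptyset}$, since positive measure implies nonempty and $\lambda_k(B\cap(x-Q))=\lambda_k(A\cap(x-Q))$ a.e.; integrating gives $\lambda_n(R\cap A^C)\le G(Q,\1_B)$. As by (i) the left-hand side equals $\underline G(Q,\1_A)=G(Q,\1_{\underline A})$ and $\underline A\in[A]_\sim$, the minimum in (ii) is attained at $\underline A$. For (iii), if $A\cap A^0=\emptyset$ then $A\subseteq\underline A$ and $A^C=A^0$ up to a null set, so $(A\oplus Q)\setminus A\subseteq(\underline A\oplus Q)\cap A^0$ up to a null set; by the reverse inclusion of (i) applied to $\underline A$ (for which $(\underline A)^0=A^0$) this lies in $R\cap A^0$ up to a null set, whence $G(Q,\1_A)\le\lambda_n(R\cap A^0)=\underline G(Q,\1_A)$, and the opposite inequality is the case $B=A$ of (ii).
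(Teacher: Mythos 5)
Your reductions (identifying both integrands in (i) with $(\1_R-\1_A)^+$ where $R=\{x:\lambda_k(A\cap(x-Q))>0\}$, the invariance of both sides under $\lambda_n$-null modifications, the reduction to $A=\underline A$), the easy inclusion $R\subseteq A\oplus Q$, and your derivations of (ii) and (iii) from (i) are all sound and close in spirit to the paper's. The genuine gap is the hard direction of (i), which you yourself call ``the whole difficulty'' and then leave as a plan: ``I would then check that both loci meet $\lambda_{n-k}$-a.e.\ slice in a $\lambda_k$-null set, so the bad set is $\lambda_n$-null.'' That check \emph{is} the theorem, and the inference you sketch for it is invalid. The bad set is not the witness loci but their Minkowski dilation: it sits inside $\bigl[(Y_b\oplus Q)\cup(A\oplus\partial_{\mathrm{rel}}Q)\bigr]\cap A^0\setminus R$, where $\partial_{\mathrm{rel}}$ denotes relative boundary. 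Slice-nullity of $Y_b$ gives no control of $Y_b\oplus Q$ --- a single point already dilates to a translate of $Q$, of positive $\lambda_k$-measure in its slice --- which is exactly the spreading phenomenon you flag one sentence earlier; so the concluding step begs the question. The second locus is worse: $A\oplus\partial_{\mathrm{rel}}Q$ typically has positive measure in every slice (take $k=1$, the $A$-slice an interval, $\partial_{\mathrm{rel}}Q$ two points); only its intersection with $A^0\setminus R$ is small, and to see even that one needs the convexity ``cap'' estimate (a point $a\in\partial_{\mathrm{rel}}(x-Q)$ of slice density $1$ still satisfies $\lambda_k\bigl((x-Q)\cap B(a,\delta)\bigr)\ge c(Q)\delta^k$, forcing $\lambda_k(A\cap(x-Q))>0$), which your proposal never states; with it, boundary witnesses are absorbed into $Y_b$, and the entire problem concentrates on $(Y_b\oplus Q)\cap A^0\setminus R$, which remains untreated.

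The decisive omission is that your final step never uses the one hypothesis that makes the claim true: bad witnesses lie in $\underline A$, so they have non-vanishing \emph{ambient} density, i.e.\ $A$ carries mass in nearby parallel slices at all small scales. Nullity of $Y_b$ alone cannot suffice: if $A$ is a segment transverse to $L$, then $Y_b=A$ is null and meets every slice in at most one point, $R=\emptyset$, yet $(Y_b\oplus Q)\cap A^0\setminus R=A\oplus Q$ has positive measure --- your inference pattern would ``prove'' this set null. (That $A$ violates $A=\underline A$ is precisely the hypothesis the inference must, but does not, invoke.) The paper closes this step by a different mechanism: it establishes the full-dimensional sandwich $\underline C\oplus\inter Q\subseteq\{x:\lambda_k(C\cap(x-Q))>0\}\subseteq\underline C\oplus Q$ and collapses it with \eqref{E_K_int}, then slices along $L^\perp$, applies this to each slice $C=A\cap(L+y)$ with its \emph{slice} regularization $\underline C$, and uses Fubini together with $\underline A\in[A]_\sim$ to identify $\underline C$ with $\underline A\cap(L+y)$ up to $\lambda_k$-null sets for $\lambda_{n-k}$-a.e.\ $y$ before integrating in $y$. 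It is this cross-slice input (alternatively, a Lebesgue density-point argument at points of the alleged positive-measure bad set, exploiting that every point of $x_0-Q$ would then have ambient density $0$) that your witness counting lacks; without an argument of this kind the proof does not close.
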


\begin{proof}
	In order to prove (i), assume first that $Q$ is full-dimensional, i.e., $k=n$. We have for any $x\in\R^n$
	$$\underline A\cap(x-\inter Q)\neq\emptyset\implies\lambda_n(A\cap(x-Q))>0\implies \underline A\cap(x-Q)\neq\emptyset,$$
	where the first implication follows from the definition of the density and the second one from the Lebesgue density theorem. Integration over $A^C$ gives
	\begin{align*}
		\int_{A^C}
		\1_{\underline A\cap(x-\inter Q)\neq\emptyset}\,\lambda_n(dx)
		&\leq \int_{A^C}\1_{\lambda_n(A\cap(x-Q))>0}\,\lambda_n(dx)
		\\&\leq\int_{A^C}\1_{\underline A\cap(x-Q)\neq\emptyset}\,\lambda_n(dx),			
	\end{align*}
	which can be rewritten as
	\begin{align*}
		\lambda_n((\underline A\oplus\inter Q)\setminus A)&\leq 
		\int_{\R^n}\left(\esssup^{n}_{u\in Q}
		\1_A(x-u)-\1_A(x)\right)^+\,dx\\&
		\leq\lambda_n((\underline A\oplus Q)\setminus A).
	\end{align*}
	Using \eqref{E_K_int}, we obtain (i) for $k=n$.

	If $k<n$ let $L$ be the $k$-subspace parallel to $Q$ and $L^\perp$ its orthogonal complement. When intersecting $A$ and a translation $L+y$ of $L$ with some $y\in L^\perp$, we will denote by $\underline{A\cap(L+y)}$ the set of all points in $y+L$ that do not have a vanishing $k$-dimensional density $\Theta^k(A\cap(L+y),\cdot)$ in $L+y$.
	Lebesgue's density theorem, applied in $L+y$, implies that
	$\lambda_k\big(\underline{A\cap(L+y)}\,\Delta\,(A\cap(L+y))\big)=0$ for all $y\in L^\perp$. 
	Fubini's theorem thus  yields that for $\lambda_{n-k}$-almost all $y\in L^\perp$,
	\[
	\lambda_k\big(\underline{A\cap(L+y)}\,\Delta\,(\underline A\cap(L+y))\big)=0, 
	\]
	where we also used $\underline A\in [A]_\sim$.
	Hence, applying the first part of the proof to the set $A\cap(L+y)\subset L+y$, we have
	\begin{align*}
		\int_{A^C\cap(L+y)}&\1_{
			\lambda_k(A\cap(L+y)\cap(x-Q))>0}\,\lambda_k(dx)\\
		&=\int_{A^C\cap(L+y)}\1_{\underline{(A\cap(L+y))}\cap(x-Q)\neq\emptyset}\, \lambda_k(dx)\\
		&=\int_{\underline A^C\cap(L+y)}\1_{\underline A\cap(L+y)\cap(x-Q)\neq\emptyset}\, \lambda_k(dx)
	\end{align*}
	for $\lambda_{n-k}$-almost all $y\in L^\perp$. Integrating over $y\in L^\perp$ we get (i).

	We show (ii). If $\lambda_k(A\cap (Q-x))>0$, then we certainly have $A\cap (Q-x)\ne \emptyset$, so \eqref{eq:G*} implies $\underline G(Q,\1_A)\leq G(Q,\1_A)$. Hence, for any set $B\in [A]_\sim$ we have $G(Q,\1_B)\ge \underline G (Q,\1_B)=\underline G(Q,\1_A)$, implying  (ii).  
	
	To prove (iii) note that $A\cap A^0=\emptyset$  implies $A\subset \underline A$, so $G(Q;1_A)\le \underline G(Q;1_A)$, whereas the reverse inequality follows from (ii). 
\end{proof}
Lemma \ref{G*}~(i) implies that $\underline G(Q;1_A)$ generalizes a similar concept in \cite{CLL14}, where only convex structuring elements with the origin as interior point have been considered. Their $\mathcal{SM}_{r,Q}(A;\R^n)$ coincides with our $\tfrac1r \underline G(rQ;1_A)$.

\section{Results for general structuring elements}\label{sec:4}

The following proposition extends known results for 
finite and full-dimensional structuring elements to the case of nonempty compact structuring elements.
\begin{theorem}\label{prop0}
	Let $A\subset \R^n$ be a set with finite perimeter. 
	For a nonempty compact set $Q\subset\R^n$ the following two statements hold. 
	\begin{enumerate}
		\item[(i)] We have 
		\begin{equation}\label{eq:trivialLowerBound}
			\liminf_{r\to 0_+}\frac1r\lambda_n((A\oplus rQ)\setminus A)\ge P_Q(A).
		\end{equation}
		\item[(ii)] If the isotropic outer Minkowski content of $A$ exists and satisfies $\mathcal {SM}(A)=\mathcal P (A)$, then $A$ admits an outer $Q$-Minkowksi content and 
		\begin{equation}\label{eq:if_iso_OMC}
			\mathcal{SM}_{Q}(A)=P_Q(A).
		\end{equation}
	\end{enumerate}
\end{theorem}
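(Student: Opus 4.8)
The plan is to reduce each part to a setting that is already under control: part (i) to finite structuring elements, and part (ii) to full-dimensional convex ones. Throughout I write $K:=\conv(Q\cup\{0\})$, so that $h(Q\cup\{0\},\cdot)=h(K,\cdot)$ and $P_Q(A)=\int_{S^{n-1}}h(K,v)\,S_{n-1}^*(A,dv)$.

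For (i) the naive bound obtained from a single point $u\in Q$, namely $\liminf_{r}\frac1r\lambda_n((A\oplus rQ)\setminus A)\ge\mathcal{SM}_{\{u\}}(A)=\int\langle u,v\rangle^+\,S_{n-1}^*(A,dv)$, is too weak, because $\sup_{u\in Q}\int\langle u,v\rangle^+\,S_{n-1}^*(A,dv)$ is in general strictly smaller than $\int h(K,v)\,S_{n-1}^*(A,dv)=P_Q(A)$ (supremum and integral need not commute). First I would therefore pass to finitely many points at once. Fix a sequence $(w_i)_{i\ge1}$ that is dense in $Q\cup\{0\}$ with $w_1=0$, and set $T_m=\{w_1,\dots,w_m\}\subseteq Q\cup\{0\}$. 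Since each $A+rw_i$ with $w_i\in Q$ lies in $A\oplus rQ$, while the term $w_i=0$ only reproduces $A$ and so contributes nothing after removing $A$, we get $(A\oplus rT_m)\setminus A\subseteq(A\oplus rQ)\setminus A$ for every $r>0$, hence $\liminf_r\frac1r\lambda_n((A\oplus rQ)\setminus A)\ge\liminf_r\frac1r\lambda_n((A\oplus rT_m)\setminus A)$. For the finite set $T_m$ the estimate $\liminf_r\frac1r\lambda_n((A\oplus rT_m)\setminus A)\ge P_{T_m}(A)=\int h(T_m\cup\{0\},v)\,S_{n-1}^*(A,dv)$ is exactly the finite-structuring-element result of \cite{JRMK2018}. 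Because $(w_i)$ is dense, the integrands $h(T_m\cup\{0\},\cdot)$ increase pointwise to $h(Q\cup\{0\},\cdot)$, so monotone convergence gives $\int h(T_m\cup\{0\},v)\,S_{n-1}^*(A,dv)\uparrow P_Q(A)$, and letting $m\to\infty$ proves \eqref{eq:trivialLowerBound}.

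If one does not wish to cite the finite case, the lower bound for $T_m$ can be produced from scratch by a blow-up, and this is the only genuinely analytic point in (i). At $\cH^{n-1}$-almost every reduced boundary point $a\in\cF A$ with outer normal $v_0$, De Giorgi's theorem gives $(A-a)/\rho\to\{y:\langle y,v_0\rangle\le0\}$ in $L^1_{\mathrm{loc}}$; performing the matching rescaling $x\mapsto(x-a)/\rho$ with $r=\rho$ turns $\bigcup_i(A+\rho w_i)\setminus A$ into a set that converges to the slab $\{0<\langle y,v_0\rangle\le\max_i\langle w_i,v_0\rangle\}$, whose width is $h(T_m\cup\{0\},v_0)$. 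A Fatou argument integrating these local contributions over $\cF A$ then recovers $\int h(T_m\cup\{0\},v)\,S_{n-1}^*(A,dv)$. Controlling this localization uniformly is the main obstacle, and it is precisely what the finite-$Q$ theory of \cite{JRMK2018} encapsulates.

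For (ii) the lower bound $\liminf\ge P_Q(A)$ is immediate from (i), so only the matching upper bound remains, and here I would sandwich $Q$ by full-dimensional bodies. For $\eps>0$ put $K_\eps:=K\oplus\eps B^n$; this is a convex body with $0\in\inter K_\eps$ and $Q\subseteq K_\eps$, whence $\lambda_n((A\oplus rQ)\setminus A)\le\lambda_n((A\oplus rK_\eps)\setminus A)$ for all $r>0$. The hypothesis $\mathcal{SM}(A)=P(A)$ is exactly the one under which Chambolle et al.\ \cite[Thm.~3.4]{CLL14} establish the anisotropic formula for full-dimensional convex structuring elements (their origin-symmetry assumption being inessential, as noted in the Introduction), so $\mathcal{SM}_{K_\eps}(A)=P_{K_\eps}(A)$. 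Since $h(K_\eps,v)=h(K,v)+\eps$ for $v\in S^{n-1}$ and $S_{n-1}^*(A,\cdot)$ has total mass $P(A)$, this reads $P_{K_\eps}(A)=P_Q(A)+\eps\,P(A)$, and therefore
\[
\limsup_{r\to0_+}\tfrac1r\lambda_n\big((A\oplus rQ)\setminus A\big)\le\mathcal{SM}_{K_\eps}(A)=P_Q(A)+\eps\,P(A).
\]
Letting $\eps\to0_+$ (here $P(A)<\infty$ is used) gives $\limsup\le P_Q(A)$, which together with (i) shows that $\mathcal{SM}_Q(A)$ exists and equals $P_Q(A)$.
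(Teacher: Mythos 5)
Your proposal is correct and follows essentially the same route as the paper: part (i) reduces to finite structuring elements via the result of \cite{JRMK2018} and then passes to the limit (the paper takes a supremum over all finite subsets with dominated convergence, you take an increasing dense sequence with monotone convergence---an immaterial difference), and part (ii) is verbatim the paper's argument, enlarging to the full-dimensional convex body $\conv(Q\cup\{0\})\oplus\eps B^n$, applying \cite[Thm.~3.4]{CLL14}, and letting $\eps\to 0_+$ using $P(A)<\infty$.
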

\begin{proof}
	We show (i). 
	For any nonempty finite set $\tilde Q\subset Q$,  \cite[Theorem 1]{JRMK2018} implies 
	\begin{align*}
		\liminf_{r\to 0_+}\frac1r\lambda_n((A\oplus r Q)\setminus A)
		\ge \liminf_{r\to 0_+}\frac1r\lambda_n((A\oplus r\tilde Q)\setminus A)
		={P_{\tilde{Q}}(A)}. 
	\end{align*}
	Since $S^*_{n-1}(A,\cdot)$ has finite total mass $P(A)<\infty$, dominated convergence implies 
	\[
	\liminf_{r\to 0_+}\frac1r\lambda_n((A\oplus rQ)\setminus A)\ge 
	\int_{S^{n-1}} 
	\sup_{ \myatop{\emptyset \ne \tilde Q\subset Q }                            {\tilde Q\text{ finite }}
	}
	h(\tilde Q\cup\{0\},v) S^*_{n-1}(A,dv) . 
	\]
	Since $h(Q\cup\{0\},\cdot)=h(\conv (Q\cup\{0\}),\cdot)$ and $\conv (Q\cup\{0\})$ can be approximated by convex polytopes in the Hausdorff-metric, the integrand equals $h(\conv (Q\cup\{0\}),\cdot)=h(Q\cup\{0\},\cdot)$, and the first claim is shown. 
	
	To prove (ii), the assumption $\mathcal {SM}(A)=\mathcal P (A)$ allows us to apply  the already mentioned result 
	\cite[Theorem 3.4]{CLL14} with the convex body $C=\conv (Q\cup \{0\})+\varepsilon B^n$. One obtains
	\begin{align*}
		\limsup_{r\to 0_+}\frac1r\lambda_n((A\oplus r Q)\setminus A)&\le \limsup_{r\to 0_+}\frac1r\lambda_n((A\oplus r C)\setminus A)
		\\&=\int_{S^{n-1}} h(C,v) S^*_{n-1}(A,dv).
	\end{align*}
	Since $h(C,\cdot)=h(Q\cup \{0\},\cdot)+\varepsilon$ and 
	$S^*_{n-1}(A,dv)$ has finite total mass, we may let $\varepsilon\to 0_+$ and obtain 
	\begin{align*}
		\limsup_{r\to 0_+}\frac1r\lambda_n((A\oplus r Q)\setminus A)&\le \int_{S^{n-1}} h(Q\cup \{0\},v) S^*_{n-1}(A,dv)=P_Q(A). 
	\end{align*}
	Together with statement (i), this implies (ii). 
\end{proof}

\begin{corollary}  \label{coro1}
Let  $A\subset\R^n$ be  set of finite perimeter and let $Q$ be a nonempty compact subset of $\R^n$. 
If $\partial A$ admits the isotropic Minkowski content which, in addition, coincides with $P(A)$, then $A$ admits the outer $Q$-Minkowski content and 
	$$\cS\cM_Q(A)=P_Q(A). $$
\end{corollary}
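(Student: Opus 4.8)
The plan is to deduce the corollary from Theorem~\ref{prop0}~(ii) by verifying its hypothesis, namely that the isotropic outer Minkowski content $\mathcal{SM}(A)=\mathcal{SM}_{B^n}(A)$ exists and equals $P(A)$. This implication is exactly the bridge established by Ambrosio, Colesanti and Villa in \cite[Theorem~5]{ACV08}, which could simply be invoked. Nevertheless I would prefer a short self-contained argument, since the tools already assembled in the paper---in particular Theorem~\ref{prop0}~(i)---suffice to recover it.

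First I would record that the hypothesis forces $\lambda_n(\partial A)=0$: otherwise $\lambda_n(\partial A\oplus rB^n)\ge\lambda_n(\partial A)>0$ for all $r$, so that $\frac{1}{2r}\lambda_n(\partial A\oplus rB^n)\to\infty$, contradicting $\mathcal{M}_{B^n}(\partial A)=P(A)<\infty$. Next I would establish the tube decomposition
\begin{equation}\label{eq:tube}
\lambda_n(\partial A\oplus rB^n)=\lambda_n\big((A\oplus rB^n)\setminus A\big)+\lambda_n\big((A^C\oplus rB^n)\setminus A^C\big),\qquad r>0,
\end{equation}
obtained by partitioning $\R^n$ into $\inter A$, $\partial A$ and $\R^n\setminus\cl A$. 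Since $\partial A$ is $\lambda_n$-null, its contribution drops out; and for a point $x\notin\cl A$ the nearest point of $\cl A$ lies on $\partial A$, so $\{x\notin\cl A:\ d(x,\partial A)\le r\}$ agrees, up to the null set $\partial A$, with $(A\oplus rB^n)\setminus A$. Applying the same identification to $A^C$ (using $\partial(A^C)=\partial A$) handles the inner piece and yields \eqref{eq:tube}.

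With \eqref{eq:tube} in hand, write $f(r)=\tfrac1r\lambda_n((A\oplus rB^n)\setminus A)$ and $g(r)=\tfrac1r\lambda_n((A^C\oplus rB^n)\setminus A^C)$, so that $f(r)+g(r)\to 2\mathcal{M}_{B^n}(\partial A)=2P(A)$. Theorem~\ref{prop0}~(i) with $Q=B^n$, applied to $A$ and to $A^C$ and using $P(A^C)=P(A)$, gives $\liminf_{r\to0_+}f(r)\ge P(A)$ and $\liminf_{r\to0_+}g(r)\ge P(A)$. Then $\limsup_{r\to0_+}f(r)\le\lim_{r\to0_+}(f+g)(r)-\liminf_{r\to0_+}g(r)\le 2P(A)-P(A)=P(A)$, whence $\lim_{r\to0_+}f(r)=P(A)$; that is, $\mathcal{SM}(A)$ exists and equals $P(A)$. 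Feeding this into Theorem~\ref{prop0}~(ii) yields $\mathcal{SM}_Q(A)=P_Q(A)$, as claimed.

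I expect the only genuine subtlety to be the factor-of-two issue in the last step: the naive inclusion $(A\oplus rB^n)\setminus A\subset\partial A\oplus rB^n$ only gives $\limsup_{r\to0_+}f(r)\le 2P(A)$, which is off by a factor of two. The decomposition \eqref{eq:tube} combined with the lower bound from Theorem~\ref{prop0}~(i) applied to the \emph{complement} $A^C$ is precisely what pins the outer part down to one half of the full tube volume and closes the gap.
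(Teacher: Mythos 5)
Your proof is correct, and its skeleton coincides with the paper's: the paper proves Corollary~\ref{coro1} in two lines, by quoting \cite[Theorem~5]{ACV08} to obtain $\mathcal{SM}(A)=P(A)$ and then invoking Theorem~\ref{prop0}~(ii) --- exactly the bridge you describe at the outset. Where you differ is in reproving the ACV ingredient rather than citing it, and your self-contained argument is in substance the paper's own proof of Proposition~\ref{prop_AFP} specialized to $Q=B^n$: your $f$ and $g$ are its $a(r)$ and $b(r)$, your lower bounds come from Theorem~\ref{prop0}~(i) applied to $A$ and to $A^C$ (legitimate, since $A^C$ has finite perimeter and $P(A^C)=P(A)$, as recorded in Section~\ref{sec:notation}), and your final squeeze is \cite[Lemma~1]{ACV08}. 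So in effect you have rediscovered that the corollary is the case $Q=B^n$ of Proposition~\ref{prop_AFP} composed with Theorem~\ref{prop0}~(ii), which is a perfectly sound alternative to the paper's citation and more self-contained. One technical remark: your tube identity as an exact equality needs not only $\lambda_n(\partial A)=0$ but also that the level set $\{x:\, d(x,\partial A)=r\}$ is $\lambda_n$-null (true for distance functions, but not a consequence of the nearest-point remark you give); however, your argument never actually needs the equality --- the exact inclusion $\bigl[(A\oplus rB^n)\setminus A\bigr]\cup\bigl[(A^C\oplus rB^n)\setminus A^C\bigr]\subset \partial A\oplus rB^n$ of disjoint sets, proved by the segment-crossing observation, already gives $\limsup_{r\to 0_+}\bigl(f(r)+g(r)\bigr)\le 2P(A)$, which is all the squeeze requires; this one-sided version is also exactly what the paper uses in Proposition~\ref{prop_AFP}.
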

\begin{proof}
	The assumptions on the set $A$ imply that 
	it
	admits the outer Minkowski content $\cS\cM(A)$ and that $\cS\cM(A)=P(A)$ by \cite[Theorem~5]{ACV08}. Theorem \ref{prop0}~(ii) now yields the claim. 
\end{proof}

\section{Line segments as structuring elements }\label{sec:5}

Given two points in $\R^n$, one belonging to a set $A$ and the other not, the segment connecting the two points must hit the topological boundary of $A$. This easy fact has its counterpart for sets with finite perimeter and their essential boundary. In particular, if $A\subset\R^n$ has finite perimeter, $u\in S^{n-1}$ and $s>0$ then the set
\begin{equation} \label{E_Ms}
	M_s(A):=\{x\in A:\, x+su\not\in A,\, \cF_{u+}A\cap [x,x+su]=\emptyset\}
\end{equation}
has Lebesgue measure zero, see \cite[Lemma~5~(ii)]{JRMK2018}. 
Here, $\cF_{u+}A$ is the set of all $a\in \cF A$ with $\langle \nu_A(a),u\rangle >0$. We will need the following consequence of this fact.

\begin{lemma}  \label{L1}
	If $A\subset\R^n$ has finite perimeter and $u\in S^{n-1}$ then there is a set $A^*\in [A]_\sim$ such that 
	\begin{equation}\label{eq:Agood}
		\lambda_n\big(
		(A^*\oplus r[0,u])
		\setminus(A^*\cup\bigcup_{a\in \cF_{u+}A}[a,a+ru])
		\big)=0
	\end{equation}
	for all $r>0$. 
\end{lemma}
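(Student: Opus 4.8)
The plan is to exploit that the structuring element $[0,u]$ is parallel to $u$, so that everything decomposes along the lines $\ell_z=\{z+tu:t\in\R\}$, $z\in u^\perp$. Indeed, for any measurable $C\subset\R^n$ one has $(C\oplus r[0,u])\cap\ell_z=(C\cap\ell_z)\oplus r[0,u]$, and since each segment $[a,a+ru]$ lies on the line through $a$, also $\big(\bigcup_{a\in\cF_{u+}A}[a,a+ru]\big)\cap\ell_z=\bigcup_{a\in\cF_{u+}A\cap\ell_z}[a,a+ru]$. Writing $A_z=\{t\in\R:z+tu\in A\}$ for the one-dimensional slice, the statement \eqref{eq:Agood} therefore reduces, via Fubini's theorem (the sets involved being measurable as sweeps of measurable sets along $u$), to a one-dimensional claim on $\lambda_{n-1}$-almost every line.

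The representative I would choose is the density-one representative along $u$, namely
\[
A^*:=\Big\{x\in\R^n:\ \lim_{\rho\to0_+}\tfrac1{2\rho}\lambda_1\big(A_{p_{u^\perp}(x)}\cap(\langle x,u\rangle-\rho,\langle x,u\rangle+\rho)\big)=1\Big\},
\]
which is Lebesgue measurable (the densities depend measurably on $x$) and satisfies $A^*\in[A]_\sim$ by the one-dimensional Lebesgue density theorem applied on a.e.\ line together with Fubini. Crucially $A^*$ does not depend on $r$, so a single set will serve all $r>0$ at once. By the slicing theory for sets of finite perimeter \cite[Thm.~3.108]{AFP} there is a $\lambda_{n-1}$-null set $N_0\subset u^\perp$ such that for $z\notin N_0$ the slice $A_z$ has finite perimeter in $\R$, hence agrees up to $\lambda_1$-null sets with a finite disjoint union $\bigcup_i(c_i,d_i)$; moreover $A^*\cap\ell_z=\bigcup_i\{z+tu:t\in(c_i,d_i)\}$ exactly, and the slicing correspondence identifies $\cF_{u+}A\cap\ell_z$ with the right endpoints $\{z+d_iu:i\}$ (this sign bookkeeping, right endpoint $\leftrightarrow\langle\nu_A,u\rangle>0$, is precisely what underlies the fact $\lambda_n(M_s(A))=0$ recorded above from \cite[Lemma~5~(ii)]{JRMK2018}).

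On such a good line the set in question is in fact empty. Suppose $z+tu\in(A^*\oplus r[0,u])\setminus A^*$; then $t\notin\bigcup_i(c_i,d_i)$, while $z+(t-s)u\in A^*$ for some $s\in(0,r]$, so $t-s\in(c_j,d_j)$ for some $j$. From $t>t-s>c_j$ we see that $t<d_j$ would give $t\in(c_j,d_j)$, contradicting $t\notin\bigcup_i(c_i,d_i)$; hence $t\ge d_j$ and consequently $d_j\in(t-r,t]$. Since $z+d_ju\in\cF_{u+}A$, the point $z+tu$ lies on the segment $[z+d_ju,\,z+(d_j+r)u]\subset\bigcup_{a\in\cF_{u+}A}[a,a+ru]$, so it does not contribute to the set in \eqref{eq:Agood}.

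Finally, the union of the lines $\ell_z$ with $z\in N_0$ is a $\lambda_n$-null subset of $\R^n$ that is invariant under the sweep by $[0,u]$; defining $A^*$ arbitrarily there (say equal to $A$) affects neither $[A]_\sim$ nor \eqref{eq:Agood}. Assembling the slices by Fubini then yields $\lambda_n\big((A^*\oplus r[0,u])\setminus(A^*\cup\bigcup_{a\in\cF_{u+}A}[a,a+ru])\big)=0$ for every $r>0$. The main obstacle I anticipate is the careful invocation of the slicing theorem with the correct orientation and the verification that the density-one representative $A^*$ is genuinely measurable and lies in $[A]_\sim$; once these are in place, the one-dimensional interval argument is immediate and, pleasantly, gives emptiness rather than merely nullity of the exceptional set.
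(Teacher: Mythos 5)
Your proof is correct, and while it shares the paper's basic skeleton---slice $\R^n$ into the lines $\ell_z$ parallel to $u$ and work with a representative of $[A]_\sim$ whose slices are exactly open intervals---the key technical ingredients are genuinely different. The paper constructs its representative explicitly in two steps: first a set $A^{**}$ whose slices are the open intervals, built as a countable union of rational boxes $A_{a,b}\times(a,b)$ (following Chleb\'ik), which makes measurability automatic; then it removes the null sets $M_s(A^{**})$, $s\in\Q\cap(0,\infty)$, so that the entire connection to $\cF_{u+}A$ is delegated to the fact $\lambda_n(M_s(\cdot))=0$ from \cite[Lemma~5~(ii)]{JRMK2018}, and the orientation part of Vol'pert's slicing theorem is never invoked. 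You instead take the line-density-one representative and use the slicing theorem \cite[Theorem~3.108]{AFP} twice: for the interval structure of a.e.\ slice (hence $A^*\in[A]_\sim$ and the exact description of $A^*\cap\ell_z$), and for the sign bookkeeping identifying right endpoints with $\cF_{u+}A\cap\ell_z$---which, as you correctly observe, is precisely the content the paper outsources to the $M_s$-lemma. Your route buys a stronger per-line conclusion: the exceptional set is \emph{empty} on a.e.\ line, hence contained in the Borel null union of bad lines, so the lemma follows from completeness of Lebesgue measure; in particular you can (and should) drop the final Fubini application, since $A^*\oplus r[0,u]$ need not be measurable a priori (a null set plus a segment can be non-measurable, e.g.\ $V\times[0,1]$ for a Vitali set $V$), whereas containment in a null set needs no measurability at all. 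The paper's route, in exchange, avoids having to verify that a density-defined set is measurable and lies in $[A]_\sim$, a point you address only briefly (correctly: the density quotient is continuous in $\rho$, so rational radii suffice, and Tonelli gives measurability in $x$). Two small points to tighten: since $A$ is only assumed to have finite perimeter it may be unbounded, so a slice can contain half-lines or be all of $\R$; your interval argument survives verbatim ($(c_j,\infty)$ produces no exceptional points, and $(-\infty,d_j)$ still has its right endpoint in $\cF_{u+}A$), but the phrase ``finite disjoint union $\bigcup_i(c_i,d_i)$'' should be stated accordingly. Also, you must take the canonical representation with pairwise \emph{disjoint closures} (as in \cite[Proposition~3.52]{AFP}); otherwise a common endpoint of two abutting intervals would be a density-one point of the slice that is not a jump point, and both your description of $A^*\cap\ell_z$ and the identification of $\cF(A_z)$ with the endpoints would fail.
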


\begin{proof}
	From \cite[Lemma~3.1]{R15} we infer that $A\cap p_{u^\perp}^{-1}\{z\}$ is a one-dimensional set of positive perimeter for $\lambda_{n-1}$-almost all $z\in u^\perp$. Thus, using \cite[Proposition~3.52]{AFP}, 
	$A\cap p_{u^\perp}^{-1}\{z\}$ equals, up to $\lambda_1$-measure zero, a locally finite disjoint union of nonempty open segments $J_1^z,J_{2}^z,\ldots$, say. 
	
	Based on an idea in \cite[Sect.~4.1]{chlebik} we show that there is a set $A^{**}\in [A]_\sim$ such that 
	\begin{equation}\label{eq_A**}
		A^{**}\cap p_{u^\perp}^{-1}\{z\}=\bigcup_i J_i^z
	\end{equation}
	for $\lambda_{n-1}$-almost all $z\in u^\perp$. To show \eqref{eq_A**}, we  will assume that $u$ is the $n$th standard basis vector  $e_n$ of $\R^n$ in order to keep notation concise. For given $a<b$ define the set 
	\[
	A_{a,b}:=\{ z\in e_n^\perp: \cH^1\big((\{z\}\times (a,b))\setminus A\big)=0\}. 
	\]
	We have $(\{z\}\times (a,b))\setminus A=p_{e_n^\perp}^{-1}(z)\cap (e_n^{\perp}\times (a,b))\setminus A$, so $z\mapsto \cH^1\big((\{z\}\times (a,b))\setminus A\big)$ is measurable as a function  of
	$z\in e_n^\perp$ by Tonelli's theorem, implying the measurablity of 
	$A_{a,b}$ in $e_n^\perp$. Thus also 
	\[
	A^{**}:=\bigcup_{\genfrac{}{}{0pt}{}{a<b}{a,b\in \Q}}A_{a,b}\times (a,b)
	\]
	is measurable. Clearly, \eqref{eq_A**} holds 
	for $\lambda_{n-1}$-almost all $z\in u^\perp$ and another application of Tonnelli's theorem now imples $A^{**}\in [A]_\sim$. 
	
	The set 
	\[
	A^*=A^{**}\setminus\bigcup_{s\in \Q\cap (0,\infty)}M_s(A^{**})
	\]
	is an element of $[A]_\sim$ due to \eqref{E_Ms}. In view of \eqref{eq_A**}, 
	$\lambda_n$-almost any point $x\in (A^*\oplus r[0,u])
	\setminus(A^{**}\cup A)$ must satisfy 
	\[
	\emptyset\ne (\cF_{u+}A^{**})\cap [x,x+ru], 
	\]
	and \eqref{eq:Agood} now follows from the fact that $\cF_{u+}A^{**}=\cF_{u+}A$.
\end{proof}

\begin{theorem}\label{ThmQlineSegment}
	Let $A\subset\R^n$ have finite perimeter and let $u\in  S^{n-1}$ be given. Then
	\[
	\mathcal{SM}_{[0,u]}(\underline{A})=\int_{S^{n-1}}\langle u,v\rangle^+\, S_{n-1}^*(A,dv). 
	\]
	In other words, 
	\[
	\mathcal{SM}_{[0,u]}(A)=\int_{S^{n-1}}\langle u,v\rangle^+\, S_{n-1}^*(A,dv)
	\]
	holds if $A\cap A^0=\emptyset$. 
\end{theorem}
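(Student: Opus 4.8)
The plan is to compute $\mathcal{SM}_{[0,u]}(\underline A)$ directly using the good representative $A^*$ from Lemma~\ref{L1}. Since $A^*\in[A]_\sim$, we have $\underline{A^*}=\underline A$, and hence $\underline G([0,ru],\1_{A})=G([0,ru],\1_{A^*})=\lambda_n\big((A^*\oplus r[0,u])\setminus A^*\big)$ for the representative $A^*$ by the definition of $\underline G$ together with Lemma~\ref{G*}. Thus it suffices to evaluate $\lim_{r\to 0_+}\frac1r\lambda_n\big((A^*\oplus r[0,u])\setminus A^*\big)$. The content of Lemma~\ref{L1} is that, up to a $\lambda_n$-null set, the parallel body increment $(A^*\oplus r[0,u])\setminus A^*$ is contained in the union of segments $\bigcup_{a\in\cF_{u+}A}[a,a+ru]$. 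This reduces the problem to measuring that union of outward-pointing segments emanating from the reduced boundary points where $u$ points outward.

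First I would argue that, up to $\lambda_n$-null sets, $(A^*\oplus r[0,u])\setminus A^*$ \emph{equals} $\bigcup_{a\in\cF_{u+}A}[a,a+ru]$ (not merely is contained in it), by noting that each such segment starts at a boundary point where $u$ is an outward direction, so a short initial piece leaves $A^*$; combined with \eqref{E_Ms} this should pin down the increment exactly up to measure zero. Next I would compute the volume of this union via the coarea/Fubini slicing in direction $u$ that already underlies Lemma~\ref{L1}: on $\lambda_{n-1}$-almost every line $p_{u^\perp}^{-1}\{z\}$, the trace $A^*\cap p_{u^\perp}^{-1}\{z\}$ is a locally finite disjoint union of open segments, and the reduced-boundary points in $\cF_{u+}A$ on that line are exactly the ``upper'' endpoints (those at which one crosses from inside to outside in direction $u$). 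The added segments $[a,a+ru]$ on that line have total one-dimensional length $r\cdot(\text{number of such endpoints})$, at least for $r$ small enough that distinct added segments do not overlap.

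The key computation is then to integrate this over $z\in u^\perp$ and relate the count of upper endpoints to the generalized surface area measure. The natural tool is the projection/coarea formula for sets of finite perimeter: for the essential boundary, integrating the number of boundary crossings in direction $u$ over the projection $u^\perp$ yields $\int_{S^{n-1}}\langle u,v\rangle^+\,S^*_{n-1}(A,dv)$, because the local contribution of a reduced boundary point with normal $v$ to the line count, weighted correctly, is governed by $\langle u,v\rangle^+$ (the cosine between the slicing direction and the outer normal). Concretely, one identifies $\int_{u^\perp}\#(\cF_{u+}A\cap p_{u^\perp}^{-1}\{z\})\,\lambda_{n-1}(dz)$ with $\int_{\cF_{u+}A}\langle u,\nu_A(a)\rangle\,\cH^{n-1}(da)=\int_{S^{n-1}}\langle u,v\rangle^+\,S^*_{n-1}(A,dv)$, using the area formula for the projection $p_{u^\perp}$ restricted to the rectifiable set $\cF A$, whose Jacobian factor is precisely $|\langle u,\nu_A(a)\rangle|$. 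Dividing by $r$ and passing to the limit (the overlap of added segments is negligible as $r\to 0_+$, handled by dominated convergence since $P(A)<\infty$ bounds the endpoint count integrably) yields the claimed formula. The final ``in other words'' statement follows immediately from Lemma~\ref{G*}~(iii), which gives $\underline G=G$ precisely when $A\cap A^0=\emptyset$.

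The main obstacle I expect is the passage from the slice-wise endpoint count to $\int_{S^{n-1}}\langle u,v\rangle^+\,S^*_{n-1}(A,dv)$: one must justify rigorously, via the rectifiability of $\cF A$ and the area formula, that the $\lambda_{n-1}$-integral over $u^\perp$ of the number of upper crossings equals the surface-area integral with the cosine weight, and that the lower crossings (where $\langle u,\nu_A\rangle<0$) correctly drop out, leaving only the positive part. A secondary technical point is controlling, uniformly in the slicing variable $z$, the threshold on $r$ below which the added segments on a given line are disjoint, so that the one-dimensional length is exactly $r$ times the count; this is where the local finiteness of the segment decomposition and an integrable bound on the endpoint count are used.
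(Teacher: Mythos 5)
Your plan has a genuine gap at its very first step, and it is the step that carries the whole logical weight. You assert $\underline G([0,ru],\1_A)=G([0,ru],\1_{A^*})$ ``by the definition of $\underline G$ together with Lemma~\ref{G*}''. The definition gives $\underline G([0,ru],\1_A)=G([0,ru],\1_{\underline A})$, and Lemma~\ref{G*}~(ii) yields only the inequality $G([0,ru],\1_{\underline A})\le G([0,ru],\1_{A^*})$; equality would follow from part~(iii) only if $A^*\cap (A^*)^0=\emptyset$, which is neither established nor true in general, since the representative $A^*$ built in Lemma~\ref{L1} can contain points of Lebesgue density zero. (Take $A=\{(x,y)\in\R^2:\,0<y<1,\ |x|<(y-\tfrac12)^2\}$ and $u=e_2$: the slice over $x=0$ is equivalent to the single interval $(0,1)$, so the whole segment $\{0\}\times(0,1)$ survives into $A^{**}$ and into $A^*$, yet the cusp point $(0,\tfrac12)$ has density zero, i.e.\ lies in $A^0$.) As a consequence, even if your slicing computation of $\lim_{r\to 0_+}\tfrac1r\lambda_n\big((A^*\oplus r[0,u])\setminus A^*\big)$ were carried out in full, you would only obtain the upper bound $\limsup_{r\to 0_+}\tfrac1r\,\underline G([0,ru],\1_A)\le\int_{S^{n-1}}\langle u,v\rangle^+\, S^*_{n-1}(A,dv)$. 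The matching lower bound for $\underline A$ is nowhere addressed in your proposal; the paper obtains it from Theorem~\ref{prop0}~(i) applied with $Q=[0,u]$ (note $h([0,u]\cup\{0\},v)=\langle u,v\rangle^+$). Your slicing machinery could in principle supply it as well (apply it to $\underline A$ directly and use Fatou's lemma, since for almost every line the slice of $\underline A$ is equivalent to the same finite union of intervals), but this must be stated and proved; as written, the argument is incomplete.

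There is also a second, more local error: the claimed identity, up to $\lambda_n$-null sets, of $(A^*\oplus r[0,u])\setminus A^*$ with $\bigcup_{a\in\cF_{u+}A}[a,a+ru]$ is false for fixed $r$, because the segments can re-enter $A$. For $A=\big([0,1]^{n-1}\times[0,1]\big)\cup\big([0,1]^{n-1}\times[1+g,2]\big)$ and $u=e_n$, every point $a$ of the top face of the lower slab lies in $\cF_{u+}A$, and for $r>g$ the union of the segments $[a,a+ru]$ meets the upper slab in measure $r-g>0$. Only the inclusion of Lemma~\ref{L1} holds, and that is all the paper uses, combined with the area-formula bound $\cH^n(\ima g)\le r\int_{S^{n-1}}\langle u,v\rangle^+\,S^*_{n-1}(A,dv)$ for $g(a,t)=a+tu$ on $\cF_{u+}A\times[0,r]$. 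Your slice-wise count is salvageable as an asymptotic statement---the excess is $o(r)$ by dominated convergence, as you partly anticipate---but the set equality should be deleted, not proved. With both repairs made, your route (Fubini slicing, Vol'pert-type identification of slice endpoints with $\cF A$, and the coarea formula for $p_{u^\perp}$ restricted to $\cF A$) would be a viable and genuinely different alternative to the paper's proof, which instead covers the increment by the image of $g$ for the $\limsup$ and quotes Theorem~\ref{prop0}~(i) for the $\liminf$; but as it stands the proposal does not prove the theorem.
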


\begin{proof} 
	Consider the mapping
	$$g:(a,t)\mapsto a+tu,\quad (a,t)\in\cF_{u+}A\times[0,r].$$
	$g$ is a Lipschitz mapping defined on a countably $\cH^n$-rectifiable set and its Jacobian is $J_ng(a,t)=\langle u,\nu_A(a)\rangle>0$, $(a,t)\in\cF_{u+}A\times[0,r]$. Thus, using the coarea formula (cf.\ \cite[Theorem~2.93]{AFP}) we obtain for the image $\ima g$ of $g$
	\begin{equation}\label{eq:im_gInt}
		\cH^{n}(\ima g)\leq\int_{\cF_{u+}A\times[0,r]}J_ng\, d\cH^n=r\int_{S^{n-1}} \langle u,v\rangle^+\, S_{n-1}^*(A,dv).
	\end{equation}
	Now consider the set $A^*\in [A]_\sim$ in Lemma \ref{L1}. Up to a set of $\lambda_n$-measure zero, the set $(A^*\oplus r[0,u])\setminus A^*$ is contained in  the image of $g$ due to \eqref{eq:Agood}. Thus \eqref{eq:im_gInt} implies   
	\begin{equation} \label{Jan*}
		\lambda_n\big((A^*\oplus r[0,u])\setminus A^*\big)\leq r\int_{S^{n-1}}\langle u,v\rangle^+\, S^*_{n-1}(A,dv),\quad r>0. 
	\end{equation}
	Since
	\[
	\lambda_n\big((\underline A \oplus r[0,u])\setminus \underline A\big)\le 
	\lambda_n\big((A^*\oplus r[0,u]) \setminus A^*\big), 
	\] 
	by 	Lemma \ref{G*}~(ii),  inequality \eqref{Jan*} also holds with $\underline A$ replacing $A^*$. Dividing by $r$ and taking limits yields 
	$$\limsup_{r\to 0_+}\frac 1r\lambda_n\big((\underline A\oplus r[0,u])\setminus \underline A\big)\leq\int_{S^{n-1}}\langle u,v\rangle^+\, S_{n-1}^*(A,dv). $$
	Since Theorem \ref{prop0}~(i) with $Q=[0,u]$ implies the reverse inequality for the limit inferior, this concludes 
	the proof.   
\end{proof}

\section{Sufficient conditions for convex structuring elements}
\label{sec:suff_cond}	

Recall that \eqref{def_M} defined the \emph{$Q$-Minkowski content} of a set $E\subset\R^n$ with structural element $Q$  as
\begin{equation*}  %\label{def:MQE}
	\mathcal{M}_{Q}(E)=\lim_{r\to 0_+}\frac 1{2r}\lambda_n(E\oplus rQ).
\end{equation*}
Note that $\mathcal{M}_{Q}(E)=\tfrac 1{2}\mathcal{SM}_{Q}(E)$ whenever $\lambda_n(E)=0$.
For $t>0$, the homogeneity property 
\begin{equation}\label{eq:homogen}
	\cM_{tQ}(E)=t\cM_Q(E)
\end{equation}
holds whenever at least one side is well-defined. 
Already  simple examples, which can be treated using classical convex geometry (cf.~\cite{Schneider}), for instance the boundary $E$ of a square in $\R^2$ and a triangle $Q\subset\R^2$, suggest that 
\begin{equation}\label{eq:MQlimit1}
	\mathcal{M}_{Q}(E)=\int_{E} h(\di Q,\nu_E(x))\cH^{n-1}(dx) 
\end{equation}
is the correct form. Here, $\nu_E(x)$ is one of the unit normals of  $E$ at $x$, and
\[
\di Q=\frac{Q\oplus (-Q)}{2}
\]
is the  \emph{symmetral} of $Q$. In particular, $\di Q$ is origin-symmetric, i.e.~$\di Q=-(\di Q)$. 
Roughly speaking, in the non-isotropic case, the contribution of a boundary patch is weighted by half the width of the structuring element $Q$ in the normal direction of the patch. 

Equation \eqref{eq:MQlimit1} also holds under weaker smoothness conditions on $E$ if $Q=B^n$:  it is sufficient that  $E$ is  a countably $\cH^{n-1}$-rectifiable compact set that satisfies the AFP-condition. In order to generalize this result to general convex $Q$, we start with a slight generalization of 
\cite[Theorem~2.104]{AFP}.

\begin{lemma}\label{lem2}
	Let $Q\subset\R^n$ be a nonempty compact convex set of dimension $k\in\{0,1,\ldots,n\}$ and let $E\subset\R^n$ be a countably $\cH^{n-1}$-rectifiable set. Then
	\begin{equation}
		\label{eq:lowerBound}
		\liminf_{r\to 0_+}\frac 1{2r}\lambda_n(E\oplus rQ)\ge \int_E h(\di Q,\nu_E(x))\,\cH^{n-1}(dx),
	\end{equation}
	where $\nu_E(x)$ is a unit vector perpendicular to $\Tan^{n-1}(E,x)$. 
\end{lemma}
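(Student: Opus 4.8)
The plan is to prove the estimate first on almost flat pieces of $E$ by an explicit area‑formula computation, and then to assemble these local estimates through a disjoint‑ball covering so that the contributions add up instead of being counted twice. Throughout I write $\phi:=h(\di Q,\nu_E)\,\cH^{n-1}\llcorner E$ for the target measure, so that the claim reads $\liminf_{r\to0_+}\tfrac1{2r}\lambda_n(E\oplus rQ)\ge\phi(\R^n)$.

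For the flat mechanism I exploit that $h(\di Q,\cdot)$ is the half–width of $Q$. Fixing a direction $\nu_0\in S^{n-1}$, compactness of $Q$ gives $q_0,q_1\in Q$ with $\langle q_1-q_0,\nu_0\rangle=2h(\di Q,\nu_0)$, and $w:=q_1-q_0$ satisfies $[q_0,q_1]\subset Q$ by convexity. If $E_0\subset E$ is a piece whose approximate normal stays close to $\pm\nu_0$ (a graph over $\nu_0^\perp$ with small Lipschitz constant), then $\Psi(e,t):=e+trw$, $(e,t)\in E_0\times[0,1]$, is Lipschitz with image in $E_0\oplus rQ$, and it is \emph{injective}: $\Psi(e,t)=\Psi(e',t')$ forces $e-e'=r(t'-t)w$, and comparing the $\nu_0$-component (nearly $0$ on the flat piece) with $r(t'-t)\langle w,\nu_0\rangle=r(t'-t)\,2h(\di Q,\nu_0)$ forces $t=t'$, hence $e=e'$, as soon as the flatness is small relative to $h(\di Q,\nu_0)/|w|$. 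Since $J_n\Psi(e,t)=r\,|\langle w,\nu_{E_0}(e)\rangle|$, injectivity upgrades the area formula on the $\cH^n$-rectifiable set $E_0\times[0,1]$ to the equality $\lambda_n(\Psi(E_0\times[0,1]))=r\int_{E_0}|\langle w,\nu_{E_0}(e)\rangle|\,\cH^{n-1}(de)$, so that
\[
\frac1{2r}\lambda_n(E_0\oplus rQ)\ \ge\ \frac12\int_{E_0}|\langle w,\nu_{E_0}(e)\rangle|\,\cH^{n-1}(de).
\]
Using $\nu_{E_0}=\pm\nu_E$ and $\Tan^{n-1}(E_0,\cdot)=\Tan^{n-1}(E,\cdot)$ $\cH^{n-1}$-a.e.\ (the property recorded in Subsection~\ref{GMT}) together with the evenness of $h(\di Q,\cdot)$, the right‑hand side equals $\int_{E_0}h(\di Q,\nu_E)\,\cH^{n-1}$ up to an error controlled by the flatness; note this step is valid for every $r>0$, not only in the limit.

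To localize I use rectifiability: cover $E$ up to an $\cH^{n-1}$-null set by countably many Lipschitz graphs, so that for $\cH^{n-1}$-a.e.\ $x_0\in E$ there is a graph $\Gamma$ through $x_0$ with classical tangent $T_0=\Tan^{n-1}(E,x_0)$ and normal $\nu_0$, and $x_0$ is a density point of $E\cap\Gamma$ in $E$. For small $\rho$ the piece $G_{x_0,\rho}:=E\cap\Gamma\cap B(x_0,\rho)$ is a graph over $T_0$ with Lipschitz constant $\delta(\rho)\to0$, and for $r<\rho/\diam Q$ one has $G_{x_0,\rho}\oplus rQ\subset(E\oplus rQ)\cap B(x_0,2\rho)$. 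Feeding $G_{x_0,\rho}$ into the flat estimate and using the density‑point property gives, for each such good $x_0$,
\[
\liminf_{r\to0_+}\frac1{2r}\lambda_n\big((E\oplus rQ)\cap B(x_0,2\rho)\big)\ \ge\ \phi\big(B(x_0,\rho)\big)\,(1-o(1))\qquad(\rho\to0).
\]
Directions with $\nu_0\perp\spa Q$, i.e.\ $h(\di Q,\nu_0)=0$, carry no $\phi$-mass and are simply discarded via $\lambda_n(\cdot)\ge0$; on the retained pieces $h(\di Q,\nu_0)>0$, so the injectivity threshold of the first paragraph and the normalisation of the density estimate are both available.

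For the globalization I invoke a Vitali covering argument for the Radon measure $\phi$: the balls $B(x_0,2\rho)$ above form a fine cover of $\phi$-almost all of $E$, so I can extract countably many \emph{disjoint} balls $B(x_j,2\rho_j)$ around good points with $\phi\big(E\setminus\bigcup_j B(x_j,2\rho_j)\big)=0$. Disjointness yields $\tfrac1{2r}\lambda_n(E\oplus rQ)\ge\sum_j\tfrac1{2r}\lambda_n\big((E\oplus rQ)\cap B(x_j,2\rho_j)\big)$; summing the local estimates over any finite subcollection, passing to $\liminf_{r\to0_+}$ of the finite sum, then exhausting and letting the flatness errors tend to $0$, gives $\liminf_{r\to0_+}\tfrac1{2r}\lambda_n(E\oplus rQ)\ge\phi(\R^n)$ (the case $\phi(\R^n)=\infty$ following by choosing finitely many disjoint balls of arbitrarily large $\phi$-mass). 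I expect the main obstacle to be the overlap problem: a naive decomposition of $E$ into flat pieces cannot be summed, since the parallel sets $E_i\oplus rQ$ overlap, which is precisely why one must route the argument through disjoint balls and the $\liminf$ of finite sums. The secondary, more technical difficulty is the injectivity threshold, which holds only once the flatness is small compared with $h(\di Q,\nu_0)/|w|$; this is exactly the place where a lower‑dimensional $Q$ forces a separate (trivial) treatment of the near‑degenerate normal directions, in contrast with the full‑dimensional case.
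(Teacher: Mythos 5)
Your strategy --- a self-contained lower bound via the width direction of $Q$, the area formula on almost flat pieces, and a covering argument --- is viable, and it is genuinely different from the paper's proof, which instead slices $\R^n$ over $L^\perp$ ($L$ the span of $Q$) by Fubini, applies the known full-dimensional lower bound \cite[Eq.~(3.2)]{LV16} in each slice, and reassembles the integrand via the coarea formula. However, your globalization step has a genuine gap.

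The gap is a radius mismatch. Your local estimate bounds $\liminf_{r\to0_+}\tfrac1{2r}\lambda_n\big((E\oplus rQ)\cap B(x_0,2\rho)\big)$ from below by $\phi\big(B(x_0,\rho)\big)(1-o(1))$: the parallel volume is captured in the ball of radius $2\rho$, but the $\phi$-mass only from the ball of radius $\rho$. The Vitali extraction then gives disjoint balls $B(x_j,2\rho_j)$ with $\phi\big(E\setminus\bigcup_j B(x_j,2\rho_j)\big)=0$, so summing your local estimates produces $\sum_j\phi\big(B(x_j,\rho_j)\big)$ on the right-hand side --- and nothing forces this to be close to $\phi(\R^n)$. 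The covering only controls $\sum_j\phi\big(B(x_j,2\rho_j)\big)$; all the mass sitting in the annuli $B(x_j,2\rho_j)\setminus B(x_j,\rho_j)$ is lost. Even invoking that the rectifiable measure $\phi$ has a.e.\ density ratio $\phi(B(x,\rho))/\phi(B(x,2\rho))\to 2^{-(n-1)}$ (an argument you do not make) would only yield the inequality with an extra factor $2^{-(n-1)}$, not the sharp constant claimed in \eqref{eq:lowerBound}. The standard repair requires an additional idea: replace the enlargement factor $2$ by $1+\eps$, use the a.e.\ existence of a positive finite $(n-1)$-density of $\phi$ (at $\phi$-a.e.\ point one has $h(\di Q,\nu_E)>0$) to compare $\phi(B(x,\rho))$ with $\phi(B(x,(1+\eps)\rho))$, and let $\eps\to0$ at the very end; alternatively, arrange the local estimate to capture the $\phi$-mass of essentially the same ball used in the covering, via continuity of measures and choosing radii with $\phi(\partial B(x_j,\cdot))=0$.

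Three smaller points should also be fixed. First, the image of $\Psi(e,t)=e+trw$ lies in the translate $(E_0\oplus rQ)-rq_0$, not in $E_0\oplus rQ$; this is harmless by translation invariance of $\lambda_n$, or use $\Psi(e,t)=e+r(q_0+tw)$. Second, covering $E$ by Lipschitz graphs does not give pieces that are graphs over the tangent plane with Lipschitz constant $\delta(\rho)\to0$: differentiability at a single point does not control the nearby Lipschitz constant (consider $f(x)=\int_0^x\sin(1/s)\,ds$ at $0$); you need the $C^1$-version of rectifiability, i.e.\ a covering, up to an $\cH^{n-1}$-null set, by countably many $C^1$ hypersurfaces. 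Third, $\phi$ need not be Radon when $\cH^{n-1}\llcorner E$ is not locally finite, so the Besicovitch density and Vitali--Besicovitch covering theorems you invoke do not apply directly; one should first restrict to subsets of $E$ of finite $\cH^{n-1}$-measure (these inherit rectifiability, tangents and normals, see Subsection~\ref{GMT}) and conclude by exhaustion, which is compatible with monotonicity of $\lambda_n(\,\cdot\oplus rQ)$.
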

We recall that $\nu_E(x)$ is uniquely 
determined up to sign at $\cH^{n-1}$-almost all $x\in E$. Hence, the right-hand side of \eqref{eq:lowerBound} is well-defined as $h(\di Q,\cdot)$ is an even function. 

\begin{proof} We may assume that $Q$ contains the origin. 
	The result is known if $\dim Q=n$, see \cite[Eq.~(3.2)]{LV16}. If $k<n$ we apply Fubini's theorem 
	$$\lambda_n(E\oplus rQ)=\int_{L^\perp}\lambda_k((E\cap (L+y))\oplus r Q)\,\lambda_{n-k}(dy),$$
	where $L$ is the linear span  of $Q$.
	Using Fatou's lemma and then \cite[Eq.~(3.2)]{LV16} in $L+y$, we get
	\begin{align*}
		\liminf_{r\to 0_+}\frac 1{2r}\lambda_n(E\oplus rQ)
		&\geq\int_{L^\perp}\liminf_{r\to 0_+}
		\frac 1{2r}\lambda_k((E\cap (L+y))\oplus rQ)\,\lambda_{n-k}(dy)\\
		&=\int_{L^\perp} \int_{E\cap(L+y)}h(\di Q,\nu_{E\cap(L+y)}^*(x))\,\cH^{k-1}(dx),
	\end{align*}
	where $\nu_{E\cap(L+y)}^*(x)$ is calculated with $L+y$ as ambient space. 
	Applying the coarea formula for $p_{L^\perp}$ on $E$ with Jacobian $J_{n-1}p_{L^\perp}(x)=\|p_L\nu_E(x)\|$, and the relation
	\[
	h(\di Q,\nu_E(x))=\|p_L\nu_{E}(x)\|\,h(\di Q,\nu_{E\cap(L+y)}^*(x))
	\]
	valid for $\lambda_{n-k}$-almost all $y\in L^\perp$ and $\cH^{k-1}$-almost all $x\in E\cap (L+y)$, we obtain the desired formula.
\end{proof}

\begin{lemma}
	\label{lem:goodsets}
	Let $Q\subset\R^n$ be a nonempty compact convex set  of dimension $k\in \{0,\ldots,n\}$,  and let $E\subset\R^n$ be a compact 
	$(n-1)$-rectifiable set.  Then 
	$$\cM_Q(E)= \int_E h(\di Q,\nu_E(x))\,\cH^{n-1}(dx).$$
\end{lemma}

\begin{proof}
	In view of Lemma \ref{lem2},  it is sufficient to show that
	\begin{equation}  \label{E_limsup}
		\limsup_{r\to 0_+}\frac 1{2r}\lambda_n(E\oplus rQ)\leq \int_E h(\di Q,\nu_E(x))\,\cH^{n-1}(dx). 
	\end{equation}
    If $\dim Q=n$ then \eqref{E_limsup} follows from \cite[Theorem~3.7]{LV16}. In the general case, we approximate a lower-dimensional set $Q$ by $Q\oplus\ep B^n$ just as we did in the proof of Theorem \ref{prop0}~(ii).
\end{proof}

\begin{theorem}  \label{TM}
	Let $Q\subset\R^n$ be a nonempty compact convex set contained in a  $k$-dimensional subspace $L$, $k\in \{0,\ldots,n\}$,  and let $E\subset\R^n$ be a  countably $\cH^{n-1}$-rectifiable compact set which satisfies the AFP-condition relative to $L$.
	
	Then $E$ admits the $Q$-Minkowski content 
	$$\cM_Q(E)=\int_E h(\di Q,\nu_E(x))\,\cH^{n-1}(dx).$$
\end{theorem}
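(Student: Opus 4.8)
The goal is to upgrade Lemma \ref{lem:goodsets}, which establishes the formula for a single $(n-1)$-rectifiable set, to a countably $\cH^{n-1}$-rectifiable set $E$ under the AFP-condition relative to $L$. The lower bound
\[
\liminf_{r\to 0_+}\frac 1{2r}\lambda_n(E\oplus rQ)\ge \int_E h(\di Q,\nu_E(x))\,\cH^{n-1}(dx)
\]
already holds for any countably $\cH^{n-1}$-rectifiable $E$ by Lemma \ref{lem2}, so the entire task reduces to proving the matching $\limsup$ upper bound. Let me think about what the AFP-condition relative to $L$ buys us here.

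The plan is to decompose $E$ into a good rectifiable piece and a small remainder, and control the two separately. First I would use countable $\cH^{n-1}$-rectifiability to write $E$, up to an $\cH^{n-1}$-null set $E_0$, as a countable union $\bigcup_{j\ge 1}E_j$ of $(n-1)$-rectifiable sets; by standard measure-theoretic arguments one may take the $E_j$ to be disjoint and compact after discarding another null set. Given $\ep>0$, choose $N$ so large that $\cH^{n-1}\big(E\setminus\bigcup_{j\le N}E_j\big)<\ep$, and set $E'=\bigcup_{j\le N}E_j$ and $R=E\setminus E'$. Since $E'$ is a finite union of compact $(n-1)$-rectifiable sets, hence itself $(n-1)$-rectifiable, Lemma \ref{lem:goodsets} applies and gives
\[
\limsup_{r\to 0_+}\frac 1{2r}\lambda_n(E'\oplus rQ)=\int_{E'} h(\di Q,\nu_E(x))\,\cH^{n-1}(dx)\le \int_E h(\di Q,\nu_E(x))\,\cH^{n-1}(dx).
\]
Subadditivity of $\lambda_n(\cdot\oplus rQ)$ under the splitting $E\subset E'\cup R$ reduces the problem to showing that the parallel-volume contribution of the small remainder $R$ is controlled by its $\cH^{n-1}$-measure, uniformly in $r$:
\[
\limsup_{r\to 0_+}\frac 1{2r}\lambda_n(R\oplus rQ)\le C\,\cH^{n-1}(R)\le C\ep,
\]
for a constant $C$ depending only on $Q$.

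The crux is therefore this uniform upper Minkowski-content estimate for an arbitrary measurable subset $R$ of $E$, and this is exactly where the AFP-condition relative to $L$ enters. The mechanism mirrors the proof of \cite[Theorem~2.104]{AFP}: one covers $R$ efficiently by balls, estimates $\lambda_n(B(x,r)\oplus rQ)$ on each, and reassembles using a Vitali-type covering argument, where the measure $\nu$ from condition \eqref{suff_k} provides the lower density bound that converts a ball-cover into a bound on $\cH^{n-1}(R)$. The key point is that for lower-dimensional $Q$ of dimension $k$, the parallel set $B(x,r)\oplus rQ$ has volume of order $r^{n-k}\cdot r^{k-1}\lambda_{n-k}(p_{L^\perp}(E\cap B(x,r)))$ rather than $r^{n-1}$, and the right-hand side of \eqref{suff_k} is tailored precisely so that $\nu(B(x,r))$ dominates this lower-dimensional volume contribution. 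I would make this quantitative by covering $R$ with balls $B(x_i,r_i)$ of radii comparable to $r$, bounding $\lambda_n(R\oplus rQ)\le\sum_i\lambda_n(B(x_i,r_i)\oplus rQ)$, invoking \eqref{suff_k} to replace each term by a constant multiple of $\nu(B(x_i,r_i))$, and using the bounded overlap of a Vitali subfamily together with $\nu\ll\cH^{n-1}$ to conclude $\lambda_n(R\oplus rQ)\le Cr\,\nu(N_\delta(R))$, where $N_\delta(R)$ is a $\delta$-neighborhood shrinking to $R$; absolute continuity of $\nu$ then yields the bound by $\cH^{n-1}(R)$ in the limit.

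The main obstacle I anticipate is precisely verifying that the factor $\lambda_{n-k}(p_{L^\perp}(E\cap B(x,r)))$ appearing in \eqref{suff_k} correctly captures the lower-dimensional parallel-volume growth: one must show that $\lambda_n(B(x,r)\oplus rQ)$ is comparable to $r^{n-k}\cdot r^{k-1}\lambda_{n-k}(p_{L^\perp}(E\cap B(x,r)))$ up to constants depending only on $Q$ and $n$. This is geometrically natural, since enlarging by $rQ$ spreads the set only within the $k$-dimensional directions of $L$ while the $(n-k)$ transverse directions contribute the projected measure, but making the two-sided comparison rigorous — especially the lower bound on the projected measure and the bounded-overlap bookkeeping for the covering — is the technical heart of the argument. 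Once the remainder estimate is in place, combining it with the Lemma \ref{lem:goodsets} estimate for $E'$ and letting $\ep\to 0_+$ gives the desired $\limsup$ bound, which together with Lemma \ref{lem2} completes the proof.
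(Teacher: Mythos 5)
Your overall skeleton (lower bound from Lemma \ref{lem2}, reduction to the $\limsup$, splitting off a finite rectifiable union, a Besicovitch covering at scale $r$ driven by the AFP-condition relative to $L$) is the same as the paper's, but the execution at the decisive step has two genuine gaps, and as written the argument fails exactly in the lower-dimensional situation the theorem is about.

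First, your covering estimate dilates the wrong sets. You bound $\lambda_n(R\oplus rQ)\le\sum_i\lambda_n(B(x_i,r_i)\oplus rQ)$ and then want to ``replace each term by a constant multiple of $\nu(B(x_i,r_i))$'' via \eqref{suff_k}, asserting that $\lambda_n(B(x,r)\oplus rQ)$ is comparable to $r^{n-k}\cdot r^{k-1}\lambda_{n-k}(p_{L^\perp}(E\cap B(x,r)))$. This cannot work: the left-hand side always contains a full ball, so it is at least $\kappa_n r^n$, whereas the right-hand side --- and hence the lower bound that \eqref{suff_k} provides for $\nu(B(x,r))$ --- can be arbitrarily small, indeed zero. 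For example, if $E$ lies in a hyperplane containing $L$, then $\lambda_{n-k}(p_{L^\perp}(E\cap B(x,r)))=0$ for every ball and the relative AFP-condition holds with $\nu\equiv0$; yet the theorem is true (both sides of the conclusion vanish). Unlike the classical AFP-condition, the relative one gives no bound of the form $\nu(B(x,r))\ge\gamma' r^{n-1}$ and therefore no control whatsoever on the \emph{number} of covering balls, so any argument that pays $\approx r^n$ per ball cannot close. The paper's proof instead dilates only $E\cap B(x_j,\rho)$ (with $\rho=r_0\ep\lambda r$) and exploits $Q\subset L$: since $rQ$ spreads points only inside $L$, the set $(E\cap B(x_j,\rho))\oplus rB^k$ projects onto $p_{L^\perp}(E\cap B(x_j,\rho))$ with $k$-dimensional fibers of radius at most $\rho+r$, giving the one-sided bound $\lambda_n\big((E\cap B(x_j,\rho))\oplus rB^k\big)\le\kappa_k(\rho+r)^k\,\lambda_{n-k}\big(p_{L^\perp}(E\cap B(x_j,\rho))\big)$, whose sum over $j$ pairs exactly with \eqref{suff_k}. (Note also that the correct volume factor is $r^k$, not $r^{n-k}\cdot r^{k-1}$, and that no two-sided comparison is needed or true.)

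Second, your static decomposition $E=E'\cup R$ with $\cH^{n-1}(R)<\ep$ cannot be converted into the $\nu$-smallness the covering argument needs. Absolute continuity $\nu\ll\cH^{n-1}$ is qualitative ($\cH^{n-1}(E)$ need not even be finite under the hypotheses), so $\cH^{n-1}(R)<\ep$ does not make $\nu$ small near $R$; the paper chooses the remainder small in $\nu$-measure directly, $\nu(E\setminus E_0)<(r_0\ep)^k$, by continuity from above of the finite measure $\nu$. Worse, your bound ends with $\nu(N_\delta(R))$ at scale $\delta\approx r$, which tends to $\nu(\cl R)$ as $r\to0_+$; since $R=E\setminus E'$ is relatively open in $E$, its closure can meet $E'$ in a set of large $\nu$-measure (e.g.\ $R$ dense in $E$), so this limit need not be small. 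The paper circumvents precisely this with an $r$-dependent excision $\tilde E=E\setminus(E_0\oplus\lambda rC_\ep)$, where $C_\ep=Q\oplus\ep B^{n-k}$ is a full-dimensional cylinder: the Besicovitch balls of radius $r_0\ep\lambda r$ centered on $\tilde E$ then avoid $E_0$ entirely, so their union has $\nu$-measure at most $\nu(E\setminus E_0)+(r_0\ep)^k$ for small $r$, while the excised part is recaptured through the inclusion $E\oplus rQ\subset(\tilde E\oplus rQ)\cup(E_0\oplus(1+\lambda)rC_\ep)$ and Lemma \ref{lem:goodsets} applied to $C_\ep$, with $\lambda=\ep^{1/n}$ and $\ep\to0$ at the end. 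Both repairs --- dilating $E\cap B_j$ rather than $B_j$, and the $\nu$-measured, $r$-dependent excision --- are essential, not technical polish.
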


\begin{proof}
	In view of Lemma \ref{lem2} it is enough to show that
	$$\limsup_{r\to 0_+}\frac 1{2r}\lambda_n(E\oplus rQ)\leq\int_{E}h(\di Q,\nu_E(x))\,\cH^{n-1}(dx)=:I_Q(E).$$
	Since $Q$ is $k$-dimensional, we may assume that 
	the origin is a relative interior point of $Q$ by translating the structuring element appropriately, if necessary.
	Due to \eqref{eq:homogen}, we may assume that $Q$ is contained 
	in the $k$-dimensional unit ball $B^k=B^n\cap L$. 	   
	Hence, there is a radius $r_0\in(0,1]$ such that $r_0B^k\subset Q\subset B^k$.
	
	Let $\ep\in (0,1)$ be given. We will make use of 
	the cylinder $C_{\varepsilon}= Q\oplus \varepsilon B^{n-k}$ with $B^{n-k}=B^n\cap L^\perp$.
	In view of \cite[\S 3.2.18]{Federer69}, 
	there exist disjoint  compact and $\cH^{n-1}$-rectifiable subsets 
	$E_1,E_2,\dots$ of $E$ with $0=\cH^{n-1}(E\setminus\bigcup_iE_i)
	=\nu(E\setminus\bigcup_iE_i)$.  
	Since $E$ is compact, the continuity of $\nu$ from above implies that there exists a number $N\in \N$ such that
	\begin{equation}\label{eq:outside_e0_irrelevant}
		\nu(E\setminus E_0)<(r_0\ep)^k, 	
	\end{equation} 
	where $E_0:=E_1\cup\dots\cup E_N$. 
	
	Fix $0<\lambda<1$,  and define, for any given $r>0$, the set
	\[
	\tilde{E}=\tilde{E}_{\varepsilon,\lambda r}:=
	E\setminus(E_0\oplus \lambda r C_{\varepsilon}).
	\]
	By Besikovitch's theorem \cite[Theorem~2.18]{AFP} there exist balls $B(x_j,r_0\ep\lambda r)$, $j\in J$, 
	with centres $x_j\in\tilde{E}$ covering $\tilde{E}$ and such that any given point in $\R^n$ is covered by at most $\xi$  balls (where $\xi$ depends on the dimension $n$ only). 
	Using the AFP-condition relative to $L$, we obtain
	\begin{align*}
		\sum_{j\in J}\gamma(r_0\ep\lambda r)^{k-1}\lambda_{n-k}(p_{L^\perp}(E\cap B(x_j,r_0\ep\lambda r)))&\leq\sum_{j\in J}\nu(B(x_j,r_0\ep\lambda r))\\
		&\leq\xi\nu\Big(\bigcup_{j\in J}B(x_j,r_0\ep\lambda r)\Big). 
	\end{align*}
	There is a constant $r(\varepsilon,\lambda,r_0)$ such that for  $0<r<r(\varepsilon,\lambda,r_0)$ none of the balls $B(x_j,r_0\ep\lambda r)$, $j\in J$ hits $E_0$. This can be seen from 
	\[
	B^n\subset B^k\oplus B^{n-k}\subset \frac1{r_0\varepsilon}C_\varepsilon,  
	\]
	and implies 
	\begin{align*}
		\sum_{j\in J}\gamma(r_0\ep\lambda r)^{k-1}\lambda_{n-k}(p_{L^\perp}(E\cap B(x_j,r_0\ep\lambda r)))
		&\leq\xi\nu\left((E\oplus r_0\ep\lambda rB^n)\setminus E_0\right)\\
		&\leq\xi(\nu(E\setminus E_0)+(r_0\ep)^k)\\
		&\leq 2\xi(r_0\ep)^k 
	\end{align*}
	for all sufficiently small $r$, where we have used 
	the continuity of the measure $\nu$ from above and 	\eqref{eq:outside_e0_irrelevant}. As a consequence, we obtain
	\begin{align*}
		\lambda_n(\tilde{E}\oplus rQ)
		&\le \lambda_n(\tilde{E}\oplus rB^k)\\
		&\leq\sum_{j\in J}\lambda_n\left((E\cap B(x_j,r_0\ep\lambda r))\oplus rB^k\right)\\
		&\leq\sum_{j\in J}\lambda_{n-k}(p_{L^\perp}(E\cap B(x_j,r_0\ep\lambda r)))\kappa_k((1+r_0\ep\lambda)r)^k\\
		&\leq 2^{k+1} \xi\kappa_k\gamma^{-1}\frac{\ep}{\lambda^{k-1}}r.
	\end{align*}
	We will now show the inclusion
	\begin{equation}\label{eq:notp111}
		E\oplus rQ\subset [\tilde E\oplus rQ]\cup 
		[E_0\oplus (1+\lambda)rC_\varepsilon]. 
	\end{equation}
	Indeed, if $x\in E\oplus rQ$ but $x\not \in \tilde E\oplus rQ$ then there is an $x'\in E\setminus \tilde E$ such that $x\in x'+rQ$. 
	Since $E\setminus \tilde E\subset E_0\oplus \lambda r C_\varepsilon$, we arrive at $x\in E_0\oplus \lambda r C_\varepsilon\oplus rQ\subset 
	E_0\oplus (1+\lambda) r C_\varepsilon$ and assertion
	\eqref{eq:notp111} is proven. 
	
	Inclusion \eqref{eq:notp111}, 
	together with Lemma \ref{lem:goodsets}, applied to $E_1,\ldots,E_N$,  now shows that 
	\begin{align*}
		\limsup_{r\to 0_+}&\frac 1{2r}\lambda_n(E\oplus rQ)
		\\&\leq  2^{k} \xi\kappa_k\gamma^{-1}\frac{\ep}{\lambda^{k-1}}
		+(1+\lambda)I_{C_\varepsilon}(E_0)
		\\&\leq  2^{k} \xi\kappa_k\gamma^{-1}\frac{\ep}{\lambda^{k-1}}
		+(1+\lambda)I_{C_\varepsilon}(E).
	\end{align*}
	At the second inequality sign, we used the facts that $h(\di C_\varepsilon,\cdot)\ge 0$ and  $E_0\subset E$, implying that $\nu_E(x)$ and $\nu_{E_0}(x)$ coincide for $\cH^{n-1}$-almost $x\in E_0$; see Subsection~\ref{GMT}. 
	Setting $\lambda:=\ep^{1/n}$ and letting $\ep\to 0$ yields 
	\begin{align*}
		\limsup_{r\to 0_+}\frac 1r\lambda_n(E\oplus rQ)
		&\leq \int_{E} h(\di Q,\nu_E(x))\,\cH^{n-1}(dx), 
	\end{align*}
	completing the proof.
\end{proof}

It is worth mentioning that the above proof is a
generalization of that of \cite[Theorem~2.104]{AFP}. However, \eqref{eq:notp111} replaces a similar, but incorrect  inclusion in the first displayed formula on  \cite[p.~111]{AFP}. 

The following proposition is an extension of \cite[Theorem~5]{ACV08} for the outer $Q$-Minkowski content with lower-dimensional structuring element $Q$. 
	
\begin{proposition}  \label{prop_AFP} 
    Let $Q\subset\R^n$ be a nonempty compact convex set, 
    %of dimension $k<n$, $L\subset\R^n$ the linear $k$-subspace parallel to $Q$ 
    and let $A\subset\R^n$ be a set with finite perimeter. If $\partial A$ admits the $Q$-Minkowski content and this equals 
    $P_{\di Q}(A)$ then $A$ admits the outer $Q$-Minkowski content, and this is given by
    \begin{equation}
	\mathcal{SM}_{Q}(A)=P_Q(A). 
	\label{eq:LVresult}
    \end{equation}
\end{proposition}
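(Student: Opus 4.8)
The plan is to prove the two inequalities $\liminf_{r\to0_+}\tfrac1r\lambda_n((A\oplus rQ)\setminus A)\ge P_Q(A)$ and $\limsup_{r\to0_+}\tfrac1r\lambda_n((A\oplus rQ)\setminus A)\le P_Q(A)$ separately. The first is immediate from Theorem \ref{prop0}~(i), so the whole task reduces to the upper bound for the $\limsup$. I would first reduce to the case $0\in Q$, which is the situation in which the hypothesis $\cM_Q(\partial A)=P_{\di Q}(A)$ is genuinely informative; note that $0\in Q$ also forces $\lambda_n(\partial A)=0$, since then $\partial A\subseteq\partial A\oplus rQ$ and a positive volume of $\partial A$ would make $\cM_Q(\partial A)$ infinite. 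The upper bound is then obtained by an anisotropic version of the boundary--neighbourhood argument of Ambrosio--Colesanti--Villa \cite{ACV08}, applied simultaneously to $A$ and to its complement $A^C$.

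The core is a purely geometric inequality. Writing $N:=\partial A\oplus rQ$, I would show that both the outer layer $O:=(A\oplus rQ)\setminus A$ and the reverse layer $J:=(A^C\oplus rQ)\setminus A^C=A\cap(A^C\oplus rQ)$ are contained in $N$, up to a $\lambda_n$-null set. For $O$ this follows from a crossing argument: if $x=a+rq\notin A$ with $a\in A$, $q\in Q$, then the segment $[a,x]$ meets $\partial A$ at some $b=a+trq$, and $x=b+r(1-t)q$ with $(1-t)q\in Q$ because $0\in Q$ and $Q$ is convex; hence $x\in N$. The same argument applied to a point $x=a'+rq\in A$ with $a'\in A^C$ gives $J\subseteq N\cap\inter A$ up to the null set $\partial A$. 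Since $O\subseteq A^C$ and $J\subseteq A$ are disjoint subsets of $N$, I obtain
\[
\lambda_n\big((A\oplus rQ)\setminus A\big)\le\lambda_n(\partial A\oplus rQ)-\lambda_n\big((A^C\oplus rQ)\setminus A^C\big).
\]

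To pass to the limit, I would divide by $r$ and take $\limsup_{r\to0_+}$. The hypothesis gives $\tfrac1r\lambda_n(\partial A\oplus rQ)\to 2\cM_Q(\partial A)=2P_{\di Q}(A)$, while Theorem \ref{prop0}~(i) applied to $A^C$ with structuring element $Q$ yields $\liminf_{r\to0_+}\tfrac1r\lambda_n((A^C\oplus rQ)\setminus A^C)\ge P_Q(A^C)=P_{-Q}(A)$, using the relation $P_Q(A^C)=P_{-Q}(A)$ from Section \ref{sec:notation}. Hence $\limsup_{r\to0_+}\tfrac1r\lambda_n((A\oplus rQ)\setminus A)\le 2P_{\di Q}(A)-P_{-Q}(A)$. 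The estimate is closed by the pointwise support-function inequality
\[
2h(\di Q,v)=h(Q,v)+h(-Q,v)\le h(Q,v)^++h(-Q,v)^+=h(Q\cup\{0\},v)+h(-Q\cup\{0\},v),
\]
valid because $t\le t^+$; integrating against $S_{n-1}^*(A,\cdot)$ gives $2P_{\di Q}(A)-P_{-Q}(A)\le P_Q(A)$, as required. Combined with the lower bound this shows that the outer $Q$-Minkowski content exists and equals $P_Q(A)$.

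The main obstacle I anticipate is making the crossing argument rigorous for a merely measurable set of finite perimeter, where ``$\partial A$'', ``$\inter A$'' and the inclusions $O\subseteq N$, $J\subseteq N\cap\inter A$ must be read modulo $\lambda_n$-null sets and tied to a fixed representative (the set $\underline A$ is the natural choice); in particular one must justify $\tfrac1r\lambda_n(\partial A\oplus rQ)\to2\cM_Q(\partial A)$ and the disjointness $O\cap J=\emptyset$ at that level. A secondary point is the reduction to $0\in Q$: since $\mathcal{SM}_Q$ and $P_Q$ depend on $Q\cup\{0\}$ and, unlike the hypothesis (which depends only on $\di Q$), are not translation invariant in $Q$, the case $0\notin Q$ cannot be reached by a naive translation and must be discussed separately.
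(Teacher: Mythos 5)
Your proposal is correct and follows essentially the same route as the paper's own proof: the same segment-crossing inclusion $\left[(A\oplus rQ)\setminus A\right]\cup\left[(A^C\oplus rQ)\setminus A^C\right]\subset\partial A\oplus rQ$ (using $0\in Q$ and convexity of $Q$), the same disjointness of the two bracketed sets, and the same lower bounds from Theorem~\ref{prop0}~(i) applied to both $A$ and $A^C$ together with $P_Q(A^C)=P_{-Q}(A)$. The only cosmetic differences are that the paper concludes via Lemma~1 of \cite{ACV08} using the exact identity $2P_{\di Q}(A)=P_Q(A)+P_{-Q}(A)$ (valid since $0\in Q$), whereas you subtract the $\liminf$ directly and close with the corresponding support-function inequality, and that the paper simply asserts the reduction to $0\in Q$ as being without loss of generality---precisely the point your final paragraph rightly flags as requiring a separate justification.
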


\begin{proof}	
    The proof uses the same idea as that of \cite[Theorem~5]{ACV08}. 
    Define $a(r):=\frac 1rG(rQ,\1_A)$ and $b(r):=\frac 1rG(rQ,\1_{A^C})$, $r>0$. We know from Theorem~\ref{prop0}~(i) that
	\begin{align}
		\liminf_{r\to 0+} a(r)&\geq a:=P_Q(A), \label{li1}\\
		\liminf_{r\to 0+} b(r)&\geq b:=P_{Q}(A^C)=P_{-Q}(A), \label{li2}
	\end{align}
    where \eqref{eq_S_AC} was used at the last equality sign. 
    We can assume without loss of generality that $0\in Q$.
    %\mk{that $0$ is a  point in the relative interior $\relint Q$ of $Q$}. 
    Let $r>0$ be given. Note that if $x\in (A\oplus rQ)\setminus A$ then $x\in A^C$ and $(x-rQ)\cap A\neq\emptyset$, hence $(x-rQ)\cap \partial A\neq\emptyset$, by the convexity of $Q$. The same holds true if $x\in (A^C\oplus rQ)\setminus A^C$. 
    Thus we have the inclusion
	$$\left[(A\oplus rQ)\setminus A\right]\cup \left[(A^C\oplus rQ)\setminus A^C\right]\subset\partial A\oplus rQ.$$
    As the union of the sets in brackets is disjoint, we obtain
    $$G(rQ,\1_A)+G(rQ,1_{A^C})\leq\lambda_n(\partial A\oplus rQ),\quad r>0.$$
    Our assumption that $\partial A$ admits the $Q$-Minkowski content equal 
    to $P_{\di Q}(A)$ now yields
    $$\limsup_{r\to 0+}(a(r)+b(r))\leq 2\cM^{n-1}_Q(\partial A)=2P_{\di Q}(A)=P_Q(A)+P_{-Q}(A)=a+b.$$
    But then, due to \eqref{li1} and \eqref{li2}, it follows that  $\lim_{r\to 0+}a(r)=a$ and $\lim_{r\to 0+}b(r)=b$ (cf.\ \cite[Lemma~1]{ACV08}).
\end{proof}

Combining Proposition~\ref{prop_AFP} with Theorem~\ref{TM}, we obtain
	
\begin{corollary}  \label{C1}
    Let $Q\subset\R^n$ be a nonempty compact convex set of dimension $k<n$, $L\subset\R^n$ the linear $k$-subspace parallel to $Q$ and $A\subset\R^n$ a set with finite perimeter such that 
    $P(A)=\cH^{n-1}(\partial A)$.
    If $\partial A$ is compact and satisfies the  AFP-condition relative to $L$,
    then $A$ admits the outer $Q$-Minkowski content, and this is given by \eqref{eq:LVresult}.
\end{corollary}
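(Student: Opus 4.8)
The plan is to verify the hypotheses of Proposition~\ref{prop_AFP} for the given structuring element $Q$; the only substantial point is that $\partial A$ admits the $Q$-Minkowski content and that $\cM_Q(\partial A)=P_{\di Q}(A)$. Once this identity is in hand, Proposition~\ref{prop_AFP} yields at once the existence of $\mathcal{SM}_Q(A)$ together with the formula \eqref{eq:LVresult}. Since both $\cM_Q$ and $h(\di Q,\cdot)$ are unaffected when $Q$ is translated (a translation of $Q$ merely shifts the Minkowski sum, leaving its volume unchanged, and the symmetral satisfies $\di(Q-t)=\di Q$), I may replace $Q$ by a translate contained in $L$ without changing any relevant quantity, so that Theorem~\ref{TM} becomes applicable; the AFP-condition relative to $L$ involves only $L^\perp$ and is likewise unaffected.

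First I would establish that $\partial A$ is countably $\cH^{n-1}$-rectifiable. By the structure theory for sets of finite perimeter (cf.~\cite{AFP}), the essential boundary $\partial^*A$ is countably $\cH^{n-1}$-rectifiable, and $\cH^{n-1}(\partial^*A)=P(A)$ by \eqref{perimeter}. Moreover $\partial^*A\subseteq\partial A$: a point of $\partial^*A$ has Lebesgue density neither $0$ nor $1$, hence it lies in $\cl A$ (density $0$ would place it in the exterior) but not in $\inter A$ (which would force density $1$), so it belongs to $\partial A$. Combining this inclusion with $\cH^{n-1}(\partial^*A)=P(A)=\cH^{n-1}(\partial A)<\infty$ gives $\cH^{n-1}(\partial A\setminus\partial^*A)=0$. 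Thus $\partial A$ is the union of the countably $\cH^{n-1}$-rectifiable set $\partial^*A$ and an $\cH^{n-1}$-null set, hence is itself countably $\cH^{n-1}$-rectifiable. As $\partial A$ is also compact and satisfies the AFP-condition relative to $L$ by hypothesis, Theorem~\ref{TM} applies and yields
\[
\cM_Q(\partial A)=\int_{\partial A}h(\di Q,\nu_{\partial A}(x))\,\cH^{n-1}(dx).
\]

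It then remains to identify this right-hand side with $P_{\di Q}(A)$. Because $\cH^{n-1}(\partial A\setminus\partial^*A)=0$, the integral may be restricted to $\partial^*A$. On $\partial^*A$ the restriction property of Subsection~\ref{GMT} gives $\Tan^{n-1}(\partial A,x)=\Tan^{n-1}(\partial^*A,x)$ for $\cH^{n-1}$-almost every $x$, and on the reduced boundary $\cF A$ this approximate tangent hyperplane is orthogonal to the measure-theoretic normal $\nu_A(x)$; hence $\nu_{\partial A}(x)=\pm\nu_A(x)$ for $\cH^{n-1}$-almost every $x\in\partial^*A$. Since $\di Q$ is origin-symmetric we have $0\in\di Q$, so that $h(\di Q\cup\{0\},\cdot)=h(\di Q,\cdot)$, and $h(\di Q,\cdot)$ is even; thus the sign ambiguity is harmless. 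Using that $S_{n-1}^*(A,\cdot)$ is the push-forward of $\cH^{n-1}\llcorner\partial^*A$ under $\nu_A$, I obtain
\[
\int_{\partial A}h(\di Q,\nu_{\partial A}(x))\,\cH^{n-1}(dx)
=\int_{\partial^*A}h(\di Q,\nu_A(x))\,\cH^{n-1}(dx)
=\int_{S^{n-1}}h(\di Q,v)\,S_{n-1}^*(A,dv)=P_{\di Q}(A).
\]
Therefore $\cM_Q(\partial A)=P_{\di Q}(A)$, the hypothesis of Proposition~\ref{prop_AFP} is satisfied, and the corollary follows. I expect this last identification to be the main obstacle, as it requires passing carefully from the geometric-measure-theoretic integral over the rectifiable set $\partial A$, weighted by its unsigned approximate normal, to the anisotropic perimeter expressed through the generalized surface area measure of $A$. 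The hypothesis $P(A)=\cH^{n-1}(\partial A)$ is precisely what forces $\partial A$ and $\partial^*A$ to agree up to an $\cH^{n-1}$-null set, which simultaneously delivers the rectifiability of $\partial A$ and makes the two normal fields coincide up to sign.
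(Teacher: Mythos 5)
Your proof is correct and follows essentially the same route as the paper's: deduce $\cH^{n-1}(\partial A\setminus\partial^*A)=0$ from $P(A)=\cH^{n-1}(\partial A)$, conclude countable $\cH^{n-1}$-rectifiability of $\partial A$, apply Theorem~\ref{TM} to obtain $\cM_Q(\partial A)$ as an integral over $\partial A$, identify that integral with $P_{\di Q}(A)$ using $\nu_{\partial A}=\pm\nu_A$ $\cH^{n-1}$-a.e.\ and the symmetry of $\di Q$, and finish with Proposition~\ref{prop_AFP}. Your explicit remark that $Q$ may first be translated into $L$ (since $\cM_Q$, $\di Q$ and the AFP-condition relative to $L$ are translation-invariant, making Theorem~\ref{TM} literally applicable) is a small point the paper leaves implicit, but it does not alter the argument.
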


\begin{proof}
    First, note that the assumption $P(A)=\cH^{n-1}(\partial A)$ implies $\cH^{n-1}(\partial A\setminus\partial^*A)=0$, see \eqref{perimeter}. Since $\partial^*A$ is countably $\cH^{n-1}$-rectifiable (see \cite[Theorems~3.59, 3.61]{AFP}), $\partial A$ is countably $\cH^{n-1}$-rectifiable as well. Applying Theorem~\ref{TM} to $\partial A$, we obtain the existence of 
    $$\cM_Q(\partial A)=\int_{\partial A}h(\di Q,\nu_{\partial A}(x))\, \cH^{n-1}(dx).$$
    Using that $\cH^{n-1}(\partial A\setminus\partial^*A)=0$, we have $\nu_{\partial A}(x)=\pm\Delta_A(x)$ $\cH^{n-1}$-almost everywhere on $\partial A$ (cf.\ Subsection~\ref{GMT}) and, as $\di Q$ is symmetric, we have
    $$P_{\di Q}(A)=\int_{\partial A}h(\di Q,\nu_{\partial A}(x))\, \cH^{n-1}(dx).$$
    Now we can apply Proposition~\ref{prop_AFP} and obtain the result.
\end{proof}

\section{Examples}  \label{Sect_examples}

\paragraph{Example 1}
Given $n\ge 4$ and $2\leq k\leq n-2$ we give a simple example of a set $A\subset\R^n$ with finite perimeter which does not admit the outer $B^n$-Minkowski content, 
but admits the outer $Q$-Minkowski content for some $k$-dimensional unit ball $Q\subset\R^n$.

Let $C\subset\R^k$ be a set with finite and positive volume, finite perimeter, and admitting the outer $B^k$-Minkowski content, and let $D\subset\R^{n-k}$ be another set with finite and positive volume, finite perimeter and such that 
\[\limsup_{r\to 0_+}\frac 1r\lambda_{n-k}((D\oplus rB^{n-k})\setminus D)=\infty.
\]
Hence, $D$ does not admit the outer $B^{n-k}$-Minkowski content; such a set is, for instance, constructed in \cite[Example~3]{ACV08}. The set  $A:=C\times D$ has finite perimeter
$$P(A)=P(C)\lambda_{n-k}(D)+\lambda_k(C)P(D),$$
and
\begin{align*}
	\lambda_n\big((A\oplus rB^n)\setminus A\big)
	&\geq\lambda_n\big((C\times (D\oplus rB^{n-k}))\setminus(C\times D)\big)\\
	&=\lambda_n(C\times((D\oplus rB^{n-k})\setminus D))\\
	&=\lambda_k(C)\lambda_{n-k}((D\oplus rB^{n-k})\setminus D),
\end{align*}
hence, $\limsup_{r\to 0_+}\frac 1r\lambda_n((A\oplus rB^n)\setminus A)=\infty$ by our assumption, so $A$ does not admit the outer $B^n$-Minkowski content. On the other hand, we have
\begin{align*}
	\lambda_n\big((A\oplus r(B^k\times\{0\}))\setminus A\big)
	&=\lambda_n\big(((C\oplus rB^k)\times D)\setminus(C\times D)\big)\\
	&=\lambda_n\big(((C\oplus rB^k)\setminus C)\times D\big)\\
	&=\lambda_k\big((C\oplus rB^k)\setminus C\big)\lambda_{n-k}(D).
\end{align*}
This, together with the choice of $C$ implies 
\begin{align*}
	\lim_{r\to 0_+}\frac 1r\lambda_n((A\oplus r(B^k\times\{0\}))\setminus A)&=P(C)\lambda_{n-k}(D)\\
	&=\int h(B^k\times\{0\},v)\, S^*_A(dv),
\end{align*}
hence, $A$ admits the outer $(B^k\times\{0\})$-Minkowski content.

\paragraph{Example 2}
We construct a compact set $A\subset\R^3$  of finite perimeter not admitting  the isotropic outer Minkowski content, nor the outer $Q$-Minkowski content for any two-dimensional convex body $Q\subset \R^3$ with the origin in the affine hull of $Q$.

Let  $D\subset\R^2$ be a compact set with finite perimeter such that $\tfrac1r G(rB^2,1_D)\to \infty$ as $r\to 0_+$ (in particular, $D$ does not admit the isotropic Minkowski content in the plane; see again~\cite[Example~3]{ACV08}). Consider the set $A':=D\times[0,1]$. As $A'\oplus r(B^2\oplus\{0\})=(D\oplus rB^2)\times[0,1]$, $A'$ does not admit the outer $(B^2\oplus\{0\})$-Minkowski content and, hence, neither the isotropic outer Minkowski content.  If  $L\subset \R^3$ is any linear $2$-subspace not containing $e_3$,
and $Q$ is a two-dimensional convex body contained in it, there is a $\rho>0$ such that $B^L=B^3\cap L\subset \rho^{-1}Q$, so 
$$A'\oplus rQ\supset A'\oplus r\rho B^L\supset(D\oplus (r\rho\cos\alpha)B^2)\times[r\rho\sin\alpha,1-r\rho\sin\alpha],$$
holds for all sufficiently small $r>0$, 
where $\alpha:=\angle(L^\perp,e_3)$. This implies that $A'$ does not admit the outer $B^L$-Minkowski content.

If $A$ is a  disjoint union of three properly chosen isometric copies of  $A'$, then $A$ does not allow for outer $Q$-Minkowski content for any two-dimensional compact convex set  $Q\subset L$, where $L$ is some two-dimensional subspace. 

\paragraph{Example 3}
We construct now a set $A\subset\R^3$ with finite perimeter which does not admit the outer $B^3$-Minkowski content, but admits the outer $B^2$-Minkowski content for \emph{any} two-dimensional unit ball $B^2$ in $\R^3$. The example could be adapted to higher dimensions.

Let $\delta:[0,\infty)\to[0,\infty)$ be a strictly increasing $C^1$-function with $\delta(0)=0$, $\delta(t)=o(t^3)$ ($t\to 0_+$) and $\delta'(t)\leq\frac{1}{32}$, $t\ge0$. (We can take for example  $\delta(t)=\delta_0\exp(-\frac 1t)$ with sufficiently small $\delta_0>0$, or choose $k>3$ and consider $\delta(t)=t^k/(32k)$ for $t\le 1$ and $\delta(t)=(1/k-1+t)/32$, otherwise.) Abusing  the notation slightly, we shall also write  $\delta(x):=\delta(\|x\|)$, $x\in \R^3\setminus\{0\}$.

Let $S\subset B^3\setminus\{0\}$ be a \emph{maximal} set such that
\begin{equation}\label{Ex1}
	\bigcup_{x\in S}B(x,\delta(x))\text{ is a disjoint union,}
\end{equation}
and observe that
\begin{equation}\label{Ex2}
	\bigcup_{x\in S}B(x,3\delta(x))\supset B^3\setminus\{0\}.
\end{equation}
Indeed, for any $y\in B^3\setminus\{0\}$ there exists $x\in S$ with $B(x,\delta(x))\cap B(y,\delta(y))\neq\emptyset$ (from the maximality of $S$), thus $\delta(x)+\delta(y)\geq\|x-y\|$. Further, as $\delta$ is $({1}/{32})$-Lipschitz, we have $\delta(x)-\delta(y)\geq-\frac{1}{32}\|x-y\|$, and, summing up both inequalities, we get $2\delta(x)\geq\frac{31}{32}\|x-y\|$, hence $y\in B(x,3\delta(x))$.

We shall further show that for any measurable set $D\subset B^3$  and any measurable function $f:\R^3\setminus \{0\}\to[0,\infty)$,
\begin{equation}\label{Ex3}
	\frac{1}{36\pi}\int_{D^{-3\delta}}f^{-3\delta}(y)\, dy
	\leq\sum_{x\in D\cap S}f(x)\delta(x)^3\leq
	\frac{3}{4\pi}\int_{D^{+\delta}}f^{+\delta}(y)\, dy,
\end{equation}
where $$D^{+\alpha\delta}:=\bigcup_{x\in D}B(x,\alpha\delta(x)),\quad  D^{-\alpha\delta}
:=D\setminus\bigcup_{x\in \R^3\setminus(\{0\}\cup D)}B(x,\alpha\delta(x)),$$
and 
\begin{align*}
	f^{+\alpha\delta}(y)&:=\sup\big\{f(z):\, \|z-y\|\leq \alpha\delta(y)\big\},\\ f^{-\alpha\delta}(y)&:=\inf\big\{f(z):\, \|z-y\|\leq \alpha\delta(y)\big\},\quad \alpha>0. 
\end{align*}
 The second inequality in \eqref{Ex3} follows from \eqref{Ex1} since
\begin{align*}
	\frac{4\pi}{3}\sum_{x\in D\cap S}f(x)\delta(x)^3&
	=\sum_{x\in D\cap S}f(x)\lambda_3\big(B(x,\delta(x))\big)\\
	&\leq \sum_{x\in D\cap S}\int_{B(x,\delta(x))}f^{+\delta}(y)\, dy\\
	&\leq \int_{D^{+\delta}}f^{+\delta}(y)\, dy.
\end{align*}
For the first inequality in \eqref{Ex3} we use \eqref{Ex2} yielding $D^{-3\delta}\subset\bigcup_{x\in D\cap S}B(x,3\delta(x))$, hence
\begin{align*}
	\int_{D^{-3\delta}}f^{-3\delta}(y)\, dy&\leq\sum_{x\in D\cap S}\int_{B(x,3\delta(x))}f^{-3\delta}(y)\, dy\\
	&\leq\sum_{x\in D\cap S}f(x)\lambda_3(B(x,3\delta(x)))\\
	&=36\pi\sum_{x\in D\cap S}f(x)\delta(x)^3.
\end{align*}
Set $\rho(x):=\delta(x)^3$ and consider the 
 union of disjoint balls
$$A:=\bigcup_{x\in S}B(x,\rho(x)).$$
Inclusion  \eqref{Ex2} implies 
\begin{equation} \label{Ex4}
	A\oplus B(0,3\delta(t))\supset S\oplus B(0,3\delta(t))\supset B(0,t-3\delta(t))\setminus\{0\},\quad 0<t\leq 1.
\end{equation}
Using the fact that
$$\rho(t)\leq\delta(t)\leq\delta(1)\leq\frac {1}{32},\quad t\leq 1,$$
and \eqref{Ex3}, we get
\begin{align*}
	\lambda_3\big(A\cap B(0,t-3\delta(t))\big)&\leq\sum_{x\in S\cap B(0,t-2\delta(t))}\frac{4\pi}{3}\rho(x)^3\\
	&\leq\frac{4\pi}{3} \frac{1}{32^6}\sum_{x\in S\cap B(0,t-2\delta(t))}\delta(x)^3\\
	&\leq  \frac{1}{32^6} \lambda_3\big(B(0,t-\delta(t))\big).
\end{align*}
This and \eqref{Ex4} yield together
\begin{align*}
	\frac{\lambda_3\big((A\oplus 3\delta(t)B^3)\setminus A\big)}{3\delta(t)} &\geq\frac{\frac{4\pi}{3}(t-3\delta(t))^3-\frac{1}{32^6}\frac{4\pi}{3}(t-\delta(t))^3}{3\delta(t)}\\
	&=\frac{4\pi}{9}\frac{32^6-1}{32^6}\frac{t^3}{\delta(t)}+o\left(\frac{t^3}{\delta(t)}\right)\to\infty,\quad t\to 0_+.
\end{align*}
Hence,
\begin{equation*} %\label{Ex5}
	\lim_{r\to 0_+}\frac 1rG(rB^3,1_A)=\infty,
\end{equation*}
so $A$ does not admit the isotropic outer Minkowski content.
Again by \eqref{Ex3}, we have
\begin{align*}
	P(A)&=\sum_{x\in S}4\pi\rho(x)^2=\sum_{x\in S}4\pi\delta(x)^6\\
	&\leq 3\int_{B(0,1+\delta(1))}(\delta(y)^3)^{+\delta}\, dy\\
	&\leq 3\lambda_3(B(0,1+\delta(1)))<\infty,
\end{align*}
so $A$ has finite perimeter.

Let now $B^2$ be any two-dimensional unit ball in $\R^3$.
An easy computation yields that for any $\rho,r>0$,
$$\lambda_3((\rho B^3\oplus rB^2)\setminus(\rho B^3))=\pi^2\rho^2r+2\pi\rho r^2.$$
Thus we have for any $0<r,t<1$
\begin{align}\nonumber
	{\frac1r}\lambda_3((A\oplus rB^2)\setminus A) \leq&\sum_{x\in S\setminus B(0,t)}\pi^2\rho(x)^2+\sum_{x\in S\setminus B(0,t)}2\pi\rho(x)r\\
	&+\frac1r\sum_{x\in S\cap B(0,t))}2\rho(x)\pi(r+t)^2,\label{eq:janGood}
\end{align}
where we used the fact that if $x\in B(0,t)$ then $B(x,\rho(x))\oplus rB^2$ is contained in a cylinder with height $2\rho(x)$ and whose base is the disc of radius $t+r$. The first sum is  exactly the anisotropic perimeter $P_{B^2}(A_t)$
of the set $A_t=\bigcup_{x\in S\setminus B(0,t)}B(x,\rho(x))$, which is a finite union of disjoint balls (since $\rho(\|x\|)\ge \rho(t)>0$ for all $t\le \|x\|\le 1$).  Note that $P_{B^2}(A_t)$ and thus the first sum in \eqref{eq:janGood} is bounded from above by the anisotropic perimeter $	P_{B^2}(A)$ of $A$. 
Thus, for any $0<r<1$ we have
\begin{align*}
	\frac 1rG(rB^2,\1_A)&\leq P_{B^2}(A)+\sum_{x\in S\setminus B(0,t))}2\pi\rho(x)r+\sum_{x\in S\cap B(0,t)}2\pi\rho(x)\frac 1r(r+t)^2.
\end{align*}
Set $b(t):=\sum_{x\in S\cap B(0,t)}\rho(x)$; we have, using \eqref{Ex3},
$$b(t)\leq\frac{3}{4\pi}\lambda_3\big(B(0,t+\delta(t))\big)\to 0,\quad t\to 0_+,$$
so choosing $r:=b(t)$, we get
$$\frac 1rB(rB^2,\1_A)\leq P_{B^2}(A)+2\pi b(1)b(t)+2\pi (b(t)+t)^2\to P_{B^2}(A),$$
as $t\to 0_+$. This, together with  Theorem \ref{prop0}(i) implies that $A$ allows for the outer $B^2$-Minkowski content.\medskip 

Finally, we shall show that $\partial A$ satisfies the AFP-condition relative to any two-dimensional linear subspace, so Corollary~\ref{C1} can successfully be applied here, since $P(A)=\cH^2(\partial A)$. Set
$$\nu(D):=\sum_{x\in S}\frac{1}{\rho(x)}\cH^2(D\cap\partial B(x,\rho(x)))$$
for any Borel set $D\subset\R^3$.
The measure $\nu$ is clearly absolutely continuous with respect to $\cH^2$. We shall show that $\nu$ is finite (hence Radon). Using \eqref{Ex3} we get
$$\nu(\R^3)\leq\sum_{x\in S}4\pi\rho(x)=\sum_{x\in S}4\pi\delta^3(x)\leq 3\lambda^3(B(1+\delta(1)))<\infty.$$

Let $L\subset\R^3$ be any $2$-subspace and $B^2\subset L$ be its unit ball. We shall verify condition \eqref{suff_k} for any ball $B(a,r)$, $a\in\partial A$, $r\in(0,1)$. We will use repeatedly the fact that if $a,x\in \R^2$ and $r>0$ then 
\begin{align}
	\label{inclusions}\begin{array}{rcl}
	B(a,r)\cap B(x,\delta(x))\ne \emptyset &\Rightarrow& x\in B(a,r)^{+(32/31)\delta},
	\\
	x\in B(a,r)^{-(32/31)\delta}&\Rightarrow &B(x,\delta(x))\subset B(a,r),
	\end{array} 
\end{align}	
where we used the notation introduced after \eqref{Ex3}.  
Indeed, to show the first implication suppose that $y\in B(x,\delta(x))\cap B(a,r)$. The Lipschitz property of $\delta$ then gives 
$
\delta(x)\le \delta(y)+\tfrac1{32}\|x-y\|\,\leq\, \delta(y)+\tfrac1{32}\delta(x),
$
so $\delta(x)\le (32/31)\delta(y)$ giving $\|x-y\|\le \delta(x)\le (32/31)\delta(y)$, as required. The contraposition of the second statement is shown with the same argument.
A similar use of the Lipschitz continuity of $\delta$ implies also that for all $a,x\in \R^2$, $s>0$ and $\alpha>0$ we have 
\begin{align}
	\label{moreInclusions}
    B(a,s)^{+\alpha\delta}\subset B\big(a,(1+\tfrac{\alpha}{32})s+\alpha \delta(a)\big),
\end{align}
If in addition, the radius on the left side of the following inclusion is positive, we also have
\begin{align}
	\label{moreInclusions2}
 B\big(a,(1-\tfrac{\alpha}{32})s-\alpha\delta(a)\big) \subset B(a,s)^{-\alpha\delta}.
\end{align}	
We only give arguments for the second of these inclusions.
Indeed, if it were not the case, there would be a point $x$ in the ball on the left-hand side of \eqref{moreInclusions2} and a point 
$y$ with $\|y-a\|=r'>s$ such that $\|x-y\|\le \alpha\delta(y)\le\alpha(\delta(a)+r'/32)$. Hence,
\[
\|a-x\|+\|x-y\|\le (1-\tfrac{\alpha}{32})s-\alpha\delta(a)+\alpha\delta(a)+\tfrac{\alpha}{32}r'<r'=\|a-y\|,
\]
which is a contradiction. 

To show the claim, we will now distinguish two cases.

\paragraph{Case $16\delta(a)\le r< 1$.} 
If $x\in S$  is such that $B(x,\rho(x))\cap B(a,r)\ne \emptyset$, then $\rho\le1$, \eqref{inclusions}, \eqref{moreInclusions} and $\delta(a)\le r/16$ give
\[
x\in B\big(a,\tfrac{32}{31}(\delta(a)+r)\big)\subset B\big(a,\tfrac{34}{31}r\big)
\]
The inclusion \eqref{moreInclusions} also gives %for any $0<r\le 31/34(1-\|a\|)$ 
\[
B\big(a,\tfrac{34}{31}r\big)^{+\delta}\subset 
B(a,\delta(a)+\tfrac{33}{31}\tfrac{17}{16}r)\subset 
B(a,\tfrac1{16}+\tfrac{33}{31}\tfrac{17}{16}r)
\subset B(a,2r),
\]
where the last inclusion comes from a crude estimate, which however is sufficient for the purpose.

Applying  \eqref{Ex3} with $f\equiv 1$ we thus get
\begin{align*}
	\lambda_1(p_{L^\perp}(\partial A\cap B(a,r)))&\leq\sum_{x\in S\cap   B(a, \frac{34}{31} r)}2\rho(x)\leq\sum_{x\in S\cap   B(a, \frac{34}{31} r)}2\delta(x)^3
    \\
	&\leq\frac{3}{2\pi}\lambda_3(B(a,2r))
    \\
%	&\leq\frac{3}{2\pi}\lambda_3(B(a,2r))\\
	&=16r^3.
\end{align*}
If, on the other hand, $x\in S\cap B(a, \frac{28}{31}r)$,  then $\rho\le1$, \eqref{inclusions}, \eqref{moreInclusions2} and $\delta(a)\le r/16$ imply  $B(x,\rho(x))\subset  B(a,r)$, so 
\eqref{Ex3} yields 
\begin{align*}
	\nu(B(a,r))&\geq\sum_{x\in S\cap B(a,\frac{28}{31}r)}\nu(\partial B(x,\rho(x)))\\
	&=4\pi\sum_{x\in S\cap B(a,\frac{28}{31}r)}\rho(x)
	=4\pi\sum_{x\in S\cap B(a,\frac{28}{31}r)}\delta(x)^3\\
	&\geq\tfrac{1}{9}\lambda_3\left(B(a,\tfrac{28}{31}r)^{-3\delta}\cap B(0,1)^{-3\delta}\right)
    \\
    &\geq \tfrac{1}{9}\lambda_3\left(B(a,\tfrac{28\cdot29}{31\cdot 32}r-3\delta(a))\cap B(0,1-3\delta(1))\right),
    \\
    &\geq \tfrac{1}{9}\lambda_3\left(B(a,\tfrac12 r)\cap B(0,1-3\delta(1))\right),
\end{align*}
where \eqref{moreInclusions2} 
was used for the  penultimate inequality and 
$\delta(a)\le r/16$ for the last 
inequality. We shall show that
\begin{equation}  \label{E_balls}
	\frac r4\geq \|a\|+3\delta(1)-1.
\end{equation}
Then the intersection of the two balls $B(a,\tfrac 12r)\cap B(0,1-3\delta(1))$ contains the circumball of the segment $[(\|a\|-\frac{r}{2})\frac{a}{\|a\|},(1-3\delta(1))\frac{a}{\|a\|}]$ of length at least $\frac r4$, hence 
$$\nu(B(a,r))\geq \frac{1}{9} \frac{4\pi}{3}\frac{1}{8^3}r^3.$$
Thus \eqref{suff_k} is satisfied with some $\gamma>0$ (we even do not need the factor $r$ on the right hand side in this case).

It remains to verify \eqref{E_balls}. This inequality is trivial when the right expression is negative, so we may assume $1-3\delta(1)\leq \|a\|\leq 1+\delta(1)$, where the right-hand bound is due to the fact that $a\in A$. Using the Lipschitz property of $\delta$, we get $\delta(1)\leq\delta(a)+\frac{1}{32}3\delta(1)$, hence $29\delta(1)\leq 32\delta(a)$, and, since $16\delta(a)\le r$ and $\|a\|\leq 1+\delta(1)$, the claim \eqref{E_balls} is obtained.

\paragraph{Case $0<r< 16\delta(a)$.} 
Since $a\in\partial A$, there exists $x_0\in S$ with $a\in\partial B(x_0,\rho(x_0))$, and we have
$$\nu(B(a,r))\geq\frac{1}{\rho(x_0)}\cH^2\left(\partial B(x_0,\rho(x_0))\cap B(a,r)\right).$$
Standard calculus with spherical coordinates yields that
$$\cH^2\left(\partial B(x_0,\rho(x_0))\cap B(a,r)\right)=\pi r^2\quad\text{ if } r\leq 2\rho(x_0).$$
Thus, as $\rho\leq 1$,
$$\nu(B(a,r))\geq\frac{1}{\rho(x_0)}\min\{\pi r^2,4\pi\rho(x_0)^2\}\geq \pi\min\{ r^2,\rho(x_0)\}.$$

Assume now that $B(a,r)\cap B(y,\rho(y))\neq\emptyset$ for some $y\in S$. In view of \eqref{inclusions} and \eqref{moreInclusions}, we have  
\[
y\in B(a,r)^{+2\delta}\subset B(a,\tfrac{17}{16}r+2\delta(a))\subset B(a,19\,\delta(a)). 
\]
Since there is an $x\in B(a,r)\cap B(y,\rho(y))$, the Lipschitz property of $\delta$ implies $|\delta(x)- \delta(a)|\le r/32$ and 
$|\delta(y)-\delta(x)|\le\delta(y)/32$, which gives
\begin{equation}
    \label{oneMore}
    \tfrac 13\delta(a)
    \le 
    \tfrac{32}{33}(\delta(a)-\tfrac{r}{32})\le \delta(y)\le \tfrac{32}{31}(\delta(a)+\tfrac{r}{32})\le 2\delta(a).
\end{equation} Hence, we get
\begin{align*}
	\lambda_1\big(p_{L^\perp}(\partial B(y,\rho(y))\cap B(a,r)\big)&
    \leq\diam\big(B(y,\rho(y))\cap B(a,r)\big)\\
	&\leq \min\{2r,2\rho(y)\}\leq 2\min\{r,2^3\rho(a)\}\\
	&\leq 2^7\min\{r,\rho(x_0)\}.
\end{align*}
where we also used the fact that $a\in B(x_0,\delta(x_0))$ implies $\delta(a)\le 2 \delta(x_0)$.
But the number of $x\in S$ such that $B(a,r)\cap B(x,\rho(x))\neq\emptyset$ is bounded by a universal constant in this case. Indeed, using \eqref{Ex3}, \eqref{moreInclusions} and \eqref{oneMore} again, we have
\begin{align*}
	 \card\big(S\cap B(a,19\,\delta(a))\big)
    &\leq\frac{3}{4\pi}\int_{B(a,19\,\delta(a))^{+\delta}}(\delta(y)^{-3})^{+\delta}\, dy\\
	&\leq \frac{3}{4\pi}{3}^3\frac{1}{\delta(a)^3}\lambda_3(B(a,19\,\delta(a))^{+\delta})\\
	&\leq   \frac{3}{4\pi}3^3\frac{1}{\delta(a)^3}\lambda_3(B(a,21\,\delta(a)))\\
	&\leq 3^3\,{21}^3=3^6\,7^3.
\end{align*}
Putting the two estimates together, we obtain (recall that $r\leq 1$)
$$r\lambda_1(p_{L^\perp}(\partial A\cap B(a,r)))\leq  2^7\,3^6\,7^3\min\{r^2,\rho(x_0)\},$$
and we find that again, \eqref{suff_k} is satisfied with some $\gamma>0$.
\medskip 

\emph{Acknowledgment.} The first author's  research was funded in part by Aarhus University Research Foundation, grant number AUFF-E-2024-6-13.
\medskip

%\emph{Data availability statement.} No datasets were generated or analyzed during the current study.
%\medskip

%\emph{Conflict-of-interest statement.} The authors have no competing interests to declare that are relevant to the content of this article.

\end{document}